\documentclass[11pt,a4paper]{article}
\usepackage[margin=25mm]{geometry}
\usepackage[T1]{fontenc}
\usepackage[utf8]{inputenc}
\usepackage{lmodern}
\usepackage{authblk}
\usepackage{amsmath,amssymb,amsthm,mathtools}
\usepackage{microtype,enumitem,xcolor,booktabs,array}
\usepackage[font=small,labelfont=bf]{caption}
\usepackage{tikz}
\usetikzlibrary{positioning,fit,calc,arrows.meta,backgrounds,decorations.pathreplacing}
\usepackage[colorlinks=true,linkcolor=blue!50!black,citecolor=blue!50!black,urlcolor=blue!50!black]{hyperref}
\setlist[itemize]{leftmargin=1.6em,itemsep=2pt,topsep=3pt}
\setlist[enumerate]{leftmargin=2.1em,itemsep=3pt,topsep=3pt}
\theoremstyle{plain}
\newtheorem{theorem}{Theorem}[section]
\newtheorem{lemma}[theorem]{Lemma}
\newtheorem{proposition}[theorem]{Proposition}
\newtheorem{corollary}[theorem]{Corollary}
\theoremstyle{definition}
\newtheorem{definition}[theorem]{Definition}

\newcommand{\ch}{\operatorname{\chi_{\ell}}}
\newcommand{\sur}{\sigma}                 
\newcommand{\NX}{N_X}
\newcommand{\NY}{N_Y}
\newcommand{\FX}{F_X}
\newcommand{\FY}{F_Y}
\newcommand{\LH}{L(H)}
\newcommand{\Kminus}[3]{K(#1,#2)-#3}      
\newcommand{\eX}{e_X}
\newcommand{\eY}{e_Y}
\newcommand{\pX}{p_X}
\newcommand{\pY}{p_Y}
\newcommand{\stepname}[1]{\par\medskip\noindent\textbf{#1}\par\nopagebreak\smallskip}
\definecolor{Xcolour}{RGB}{20,112,150}
\definecolor{Ycolour}{RGB}{190,112,28}
\usetikzlibrary{arrows.meta,backgrounds,shapes.geometric}
\tikzset{
  vtx/.style={circle,draw,fill=white,inner sep=0pt,minimum size=5.5pt},
  vtxT/.style={vtx,fill=orange!70},
  vtxz/.style={vtx,fill=black},
  vtxdel/.style={circle,draw=gray!60,fill=gray!15,inner sep=0pt,minimum size=5.5pt},
  clique/.style={draw,rounded corners=7pt,fill=gray!12,inner sep=6pt},
  complete/.style={line width=3.2pt,gray!55,shorten <=1.5pt,shorten >=1.5pt},
  missing/.style={dashed,line width=0.9pt,red!75!black},
  flat/.style={line width=1.4pt,blue!60!black},
  lbl/.style={font=\small},
  slbl/.style={font=\scriptsize},
  box/.style={draw,rounded corners=3pt,align=center,font=\small,inner sep=4pt,text width=54mm},
  dec/.style={draw,rounded corners=3pt,align=center,font=\small,inner sep=4pt,fill=blue!6,text width=42mm},
  fin/.style={draw,rounded corners=3pt,align=center,font=\small\bfseries,fill=gray!15,inner sep=5pt},
  arr/.style={-{Stealth[length=2mm]},thick},
}
\hypersetup{pdftitle={Elementary graphs are chromatic-choosable},
            pdfsubject={List colouring of elementary graphs}}
\title{\bfseries Elementary graphs are chromatic-choosable}
\author[1]{Narayanan N}
\author[2]{Nandana K Vasudevan}
\author[2]{K Somasundaram} 
\affil[1]{Department of Mathematics \\
Indian Institute of Technology Madras, Chennai, India.}
\affil[2]{Department of Mathematics\\ Amrita School of Physical Sciences - Coimbatore \\ Amrita Vishwa Vidyapeetham, India.}
\date{}
\begin{document}
\maketitle
\thispagestyle{plain}
\begin{abstract}
A graph is \emph{elementary} if its edges can be coloured with two colours so that the two
edges of every induced path on three vertices receive different colours. Elementary graphs are
one of the two basic classes in the Chv\'atal--Sbihi decomposition of claw-free perfect graphs;
they include all line graphs of bipartite multigraphs and all complements of bipartite graphs.
We prove that every elementary graph $G$ satisfies $\ch(G)=\chi(G)=\omega(G)$, confirming for this
class the conjecture of Gravier and Maffray that claw-free graphs are chromatic-choosable.
The proof combines the Maffray--Reed description of elementary graphs as augmentations of line
graphs of bipartite multigraphs, Galvin's theorem, an extremal (edge-maximal) choice of
counterexample, and a Hall-type lemma describing how list colourings of a complete graph minus
a matching can fail.
\end{abstract}
\textbf{Keywords :} List coloring, Claw-free perfect graphs, Elementary graphs \\
\textbf{Mathematics Subject Classification :} 05C15, 05C17\\
\section{Introduction}\label{sec:intro}
All graphs in this paper are finite. Graphs whose vertices are being coloured are simple; the
multigraphs whose line graphs appear in the proof may have parallel edges but no loops.
A \emph{list assignment} on a graph $G$ gives every vertex $v$ a finite set $L(v)$ of colours,
and an \emph{$L$-colouring} is a proper colouring $f$ with $f(v)\in L(v)$ for all $v$. The graph
is \emph{$k$-choosable} if it has an $L$-colouring whenever all lists have size at least $k$, and
its \emph{choice number} $\ch(G)$ is the least such $k$
\cite{ERT80,Viz76}. Since ordinary colourings are list colourings with identical lists,
$\omega(G)\le\chi(G)\le\ch(G)$, where $\omega$ and $\chi$ are the clique number and the chromatic
number. A graph with $\ch(G)=\chi(G)$ is called \emph{chromatic-choosable}. The concept was introduced independentry by Vizing \cite{Viz76} and Erd\H{o}s, Rubin and Taylor \cite{ERT80}. We refer the interested reader to the dynamic survey by Vasudevan et al. \cite{NSN} for more details on the literature.
Following Chv\'atal and Sbihi \cite{CS88}, a graph is \emph{elementary} if its edges can be
coloured red and blue so that every induced path on three vertices has one red and one blue
edge. Chv\'atal and Sbihi showed that every claw-free perfect graph can be decomposed along
clique cutsets into elementary graphs and so-called peculiar graphs, and Maffray and Reed
\cite{MR99} later described elementary graphs exactly: they are the graphs obtained from line
graphs of bipartite multigraphs by replacing the two ends of some pairwise disjoint ``flat''
edges by complements of bipartite graphs. In particular, line
graphs of bipartite multigraphs and complements of bipartite graphs are elementary. We prove the following:
\begin{theorem}\label{thm:main}
Every elementary graph $G$ satisfies $
\ch(G)=\chi(G)=\omega(G).
$
\end{theorem}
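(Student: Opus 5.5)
Since elementary graphs are perfect, $\chi(G)=\omega(G)$, so it suffices to show $\ch(G)\le\omega(G)$. The plan is to argue by contradiction with an extremal counterexample. Among all elementary graphs $G$ and list assignments $L$ with $|L(v)|\ge\omega(G)$ for every $v$ such that $G$ has no $L$-colouring, we choose one with $|V(G)|$ minimum and, subject to that, with $|E(G)|$ maximum. The maximality is taken within the Maffray--Reed family with the same clique number: we may add edges as long as the graph stays elementary and $\omega$ does not grow. Adding edges can only make $L$-colouring harder, so the enlarged graph is again a counterexample, and maximality lets us assume $G$ is ``saturated.''

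By the Maffray--Reed theorem, $G$ is obtained from the line graph $L(H)$ of a bipartite multigraph $H$ by replacing the ends of pairwise disjoint flat edges $e_1,\dots,e_k$ by cobipartite augmentations $A_i$. Each $A_i$ is a complete graph on $X_i\cup Y_i$ minus a bipartite graph between $X_i$ and $Y_i$. If $k=0$, then $G=L(H)$, and Galvin's theorem gives $\ch(L(H))=\chi'(H)=\Delta(H)=\omega(L(H))$ whenever $\omega$ is attained at a vertex star. In a bipartite multigraph there are no triangles, so every clique of $L(H)$ is contained in a star, and hence $\omega(L(H))=\Delta(H)$. So we may assume $k\ge1$.

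In the saturated counterexample, maximality should force each augmentation to be as dense as possible. The missing edges between $X_i$ and $Y_i$ can be reduced to a matching, because adding a missing edge inside $A_i$ keeps the graph elementary. That operation preserves the red/blue structure: $X_i$ and $Y_i$ keep their attachment sides. The only thing to check is that $\omega$ stays the same, and I expect the extremal choice to be tuned so that it does, for instance by adding an edge whenever it does not increase $\omega$. So each $A_i$ becomes $K(|X_i|,|Y_i|)$ minus a matching $M_i$.

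The core step is then a Hall-type lemma for $K_n$ minus a matching $M$ with lists. An $L$-colouring of such a graph is a system of representatives in which distinct vertices get distinct colours, except that the two ends of a matching edge may share a colour. Applying Hall's theorem to the graph obtained by contracting matched pairs should give the following: the colouring fails only if some set $S$ of vertices has $|\bigcup_{v\in S}L(v)|$ smaller than the number of classes $S$ meets, counting merged matched pairs. I would state the lemma in the form ``either an $L$-colouring exists that is arbitrary on a prescribed part, or there is a tight Hall set $S$ with $|\bigcup_S L(v)|<|S|-|M[S]|$.''

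With this lemma, the strategy is to colour $A_i$ first, or to reduce it. Each $A_i$ plays the role of an edge $xy$ of $H$, with $X_i$ attached to the clique at $x$ and $Y_i$ attached to the clique at $y$. One of two things happens. If a tight Hall set appears, it has size below $\omega$, and I would use it to shrink the graph: colour it, delete it, and remove its colours from neighbours' lists, keeping the list sizes at least the new clique number. This contradicts minimality of $|V(G)|$. Otherwise we can precolour the augmentations so that each $A_i$ uses a set of colours consistent with a single ``virtual edge'' list. Then we replace each $A_i$ by parallel edges in $H$, giving a line graph of a bipartite multigraph with lists of size at least its maximum degree, and Galvin's theorem, in its kernel/orientation form, finishes the colouring.

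I expect the main obstacle to be this last interface: converting the colouring of an augmentation, where $X_i$ and $Y_i$ see different outside cliques, into list conditions Galvin can absorb. The bound $\omega\ge |X_i|+|N_X|$ and similar bounds must give enough slack on both sides at once. My first attempt would be a Galvin-style kernel argument run directly on an orientation of $G$ in which each $A_i$ is oriented transitively, consistent with the bipartition ordering. I would use the Hall lemma only to handle the vertices where out-degree plus one would exceed the list size.
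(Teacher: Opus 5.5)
\textbf{Verdict: there is a genuine gap.} Your outline never shows how to colour, or eliminate, an \emph{incomplete} augment, and the two mechanisms you sketch for this do not work.

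Your overall frame matches the paper's: an extremal counterexample, the Maffray--Reed representation, saturation of each augment to $K(X,Y)-M$, a Hall-type lemma for $K(X,Y)-M$, and Galvin at the end. The saturation step is fine once you note two facts: K\H{o}nig's theorem, and that every clique meeting both sides of an augment lies inside it. Together these show the added edges do not raise $\omega$. But the proposal stops exactly where the proof has to do its work.

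\textbf{The Hall branch.} With the original lists (size $\ge\omega(G)\ge\omega(J)$), an augment $J=K(X,Y)-M$ is always colourable by Corollary~\ref{cor:deficiency}. A Hall violation can only arise for the residual lists $L(v)\setminus f(N_X)$ on $X$ and $L(v)\setminus f(N_Y)$ on $Y$, left by a colouring $f$ of the outside. It can indeed arise when $f$ is an arbitrary colouring of $G-V(J)$ supplied by minimality. Your remedy, ``colour the tight Hall set, delete it, strip its colours from the neighbours' lists'', fails for two reasons.
\begin{enumerate}
\item A set with $|\bigcup_{v\in S}L(v)|<|S|-|M[S]|$, as in your lemma, cannot be coloured at all.
\item Even for a colourable set, Ohba-style stripping preserves ``list size $\ge$ clique number'' only in complete multipartite graphs. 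Here a vertex of $N_X$ (say an edge $p_Xq$ of $H$) loses one colour per coloured neighbour in $X$, while it may still lie in a $k$-clique at the star of $q$.
\end{enumerate}

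\textbf{The parallel-edge branch.} Replacing $A_i$ by parallel edges works only for \emph{complete} augments (Lemma~\ref{lem:linegraph}). For an incomplete one, $|X|+|Y|=k+\mu$ parallel edges meet at $c$, so $\Delta(H')>k$. Also, since $p_X$ and $p_Y$ lie on the same side of $H$, no single edge can represent a matched pair seeing both $N_X$ and $N_Y$. The kernel-orientation route is only named, not constructed.

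\textbf{What is missing.} The missing idea is the paper's Proposition~\ref{prop:noincomplete}: the minimal counterexample has no incomplete augment.
\begin{itemize}
\item Saturation must give more than the matching structure. You need $\omega(J)=k=\mu+r+s$ exactly, which yields $|N_X|\le s$ and $|N_Y|\le r$ because $X\cup N_X$ and $Y\cup N_Y$ are cliques. You also need $N_X,N_Y\ne\varnothing$, which yields $r,s\ge1$.
\item Then the residual lists have sizes at least $|X|$ and $|Y|$, so Lemma~\ref{lem:deficiency} applies. It gives a witness $T$ of free vertices meeting both $U$ and $V$ with $|R(T)|\le|T|+\mu-1$.
\item The outside colouring is chosen deliberately, not arbitrarily. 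Delete $Y_0$, so that $X_0\cup U\cup V$ becomes a $k$-clique. Give every vertex of $X_0$ a copy of $L(z)$, where $z$ lies in a minimum-surplus set $S\subseteq U$ on the side chosen by $m_X+r\le m_Y+s$. Then colour $G-Y_0$ by minimality.
\item The surplus choice forces $T\cap S\ne\varnothing$, so $T\cup S$ still satisfies the same deficiency bound. Yet $f$ places $\mu+|T\cup S|$ distinct colours from $R(T\cup S)$ on $X_0\cup T\cup S$. This contradiction shows $f$ extends to $J$.
\end{itemize}
Only after this step does your Galvin endgame apply.
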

The List Colouring Conjecture, formulated independently by several authors (see
\cite{Gal95,GM98} for its history), asserts that every line graph is chromatic-choosable.
Galvin \cite{Gal95} proved it for line graphs of bipartite multigraphs. Gravier and Maffray
\cite{GM98} asked whether every \emph{claw-free} graph is chromatic-choosable, proved this for
peculiar graphs and for complements of triangle-free graphs. The same authors, settled  the
case of $3$-colourable elementary graphs \cite{GM97}, and  raised the question whether all claw-free perfect graphs are chromatic choosable and settled the case of $4$-colorable elementary graphs \cite{GMP2016}. Theorem~\ref{thm:main} settles the elementary case
completely.

We borrow heavily from several previous works. First is the proof of Ohba’s conjecture \cite{NRW15,NWWZ15}. Two devices from that work is used here in modified form. The first is saturation: just as a counterexample to Ohba’s conjecture may be assumed to be complete multipartite because adding edges does not make a bad list assignment colourable \cite{NRW15,NWWZ15}, our minimal counterexample is chosen with the maximum number of edges, forcing every incomplete augment to have a rigid shape (Lemma~\ref{lem:saturation}). However, unlike the chromatic number in Ohba's, being elementary is not preserved by arbitrary edge additions. So this step leans on the Maffray--Reed characterisation \cite{MR99}. The second idea we borrow is list reduction: removing a non-adjacent pair that shares a colour and deleting that colour from the remaining lists in the deficiency lemma (Lemma~\ref{lem:deficiency}) is the two-vertex analogue of  a similar idea from \cite{NWWZ15}, and the residual lists of \eqref{eq:residual} are the standard reduction of \cite{NRW15}. What is new is the use of minimality on a smaller graph whose lists have been deliberately altered: the matched vertices of one side receive a copy of a single free vertex’s list, chosen through a surplus-minimising set, and it is this copied colouring that the deficient set of Lemma~\ref{lem:deficiency} cannot coexist with.

\subsection{Overview of the proof}\label{sec:overview}
We prove the  more flexible statement that, for every $k\ge1$, every elementary graph with clique number at most $k$ is $k$-choosable, and we argue by minimal
counterexample. Let $G$ be a counterexample with the fewest vertices and, subject to that,
the most edges. Let $L$ be an assignment of $k$-lists from which $G$ cannot be coloured. The Figure \ref{fig:flow} presents the structure of the proof.
\begin{figure}[t]
\centering
\begin{tikzpicture}[x=1cm,y=1cm]
\node[box] (start) at (0,0) {Minimal counterexample $G$ (fewest vertices, then most edges) and bad $k$-lists $L$};
\node[box] (mr) at (0,-1.55) {Maffray--Reed: $G$ is an augmentation of $L(H)$ for a bipartite multigraph $H$};
\node[dec] (q) at (0,-3.0) {Is some augment $J=(X,Y)$ incomplete?};
\node[box] (sat) at (-3.6,-4.9) {Saturation Lemma: the missing $X$--$Y$ pairs form a matching $M$, $\ \omega(J)=\mu+r+s=k$, $\ |\NX|\le s$, $|\NY|\le r$};
\node[box] (red) at (-3.6,-7.0) {Delete $Y_0$ and copy the list of a chosen free vertex $z\in U$ onto $X_0$; minimality colours $G-Y_0$};
\node[box] (def) at (-3.6,-9.15) {Deficiency Lemma and the $k$-clique $X_0\cup U\cup V$ force the colouring to extend to $J$};
\node[box] (lg) at (3.6,-4.9) {All augments complete: $G=L(H')$ for a bipartite multigraph $H'$ with $\Delta(H')\le k$ \hfill (Lemma~\ref{lem:linegraph})};
\node[box] (gal) at (3.6,-7.0) {Galvin's theorem: $G$ is $k$-choosable};
\node[fin] (contra) at (0,-11.0) {contradiction: $G$ has an $L$-colouring};
\draw[arr] (start)--(mr);
\draw[arr] (mr)--(q);
\draw[arr] (q.west) -| node[pos=0.2,above,lbl]{yes} (sat.north);
\draw[arr] (q.east) -| node[pos=0.2,above,lbl]{no} (lg.north);
\draw[arr] (sat)--(red);
\draw[arr] (red)--(def);
\draw[arr] (lg)--(gal);
\draw[arr] (def.south) |- (contra.west);
\draw[arr] (gal.south) |- (contra.east);
\end{tikzpicture}
\caption{The structure of the proof.}
\label{fig:flow}
\end{figure}
By the theorem of Maffray and Reed, $G$ is obtained from the line graph of a bipartite
multigraph $H$ by \emph{augmenting} a matching of flat edges: each such edge $xy$ is replaced by
a complement of a bipartite graph, that is, by two cliques $X$ and $Y$ with some edges between
them, $X$ inheriting the neighbours of $x$ and $Y$ those of $y$.  
If every augment $(X,Y)$ is \emph{complete}---all edges between $X$ and $Y$ present---then $G$ is
itself the line graph of a bipartite multigraph, obtained from $H$ by multiplying edges, and
Galvin's theorem colours it.
So the work aims to show that
a minimal counterexample has no \emph{incomplete} augment. 
\par 
Because $G$ has the maximum number of edges among counterexamples of its order, an incomplete
augment $J=(X,Y)$ is \emph{saturated}: its missing $X$--$Y$ pairs form a matching $M$ of size
$\mu\ge1$, and its clique number $|X|+|Y|-\mu$ equals $k$ exactly. 
If $X_0,Y_0$ are the matched vertices and $U,V$  the free vertices, with $|U|=r$ and $|V|=s$,
we have $k=\mu+r+s$. The exterior neighbourhoods $\NX$ and $\NY$ of the two sides are disjoint
cliques, each complete to its side, and we have $|\NX|\le s$ and $|\NY|\le r$.
Suppose one augment is a complete graph on $X\cup Y$ minus a matching
$M$ between $X$ and $Y$, with lists of size at least $|X|$ on $X$ and at least $|Y|$ on $Y$. The Deficiency Lemma~\ref{lem:deficiency} 
says that if such a graph cannot be coloured, the failure is witnessed by a set $T$ of \emph{free}
vertices meeting both $U$ and $V$ whose lists have union of size at most $|T|+\mu-1$.  \par The proof is
a short induction: a Hall violating set either contains no whole missing pair, and is essentially
$T$, or contains a missing pair, whose two vertices share a colour and can be removed together.
We delete the $\mu$ vertices of $Y_0$, obtaining a smaller
elementary graph $G'$, and colour it from lists $L'$ that agree with $L$ except on $X_0$, where
every vertex receives a copy of the list of one carefully chosen free vertex $z\in U$. Minimality
of $G$ provides an $L'$-colouring $f$ of $G'$. We keep $f$ outside $J$ and try to recolour $J$
from the \emph{residual} lists $R$ (the original lists minus the colours that $f$ uses on $\NX$,
respectively $\NY$). The bounds $|\NX|\le s$ and $|\NY|\le r$ make $R$ satisfy the hypothesis
of the Deficiency Lemma, so if the recolouring failed we would get a deficient set $T$ of free
vertices. The vertex $z$ is chosen inside a set $S\subseteq U$ of minimum \emph{surplus}
$|L(S)|-|S|$; this guarantees that $T$ can be enlarged to $T'=T\cup S\ni z$ without losing
deficiency. But $X_0\cup U\cup V$ is a clique of size $k$ in $G'$ on which $f$ is injective, and
$f$ colours $X_0$ from $R(z)$ and $T'$ from $R(T')$: that is $\mu+|T'|$ distinct colours inside a
set of at most $\mu+|T'|-1$ colours. This contradiction shows that the recolouring succeeds, so
$G$ is $L$-colourable after all.
Why copy the list of $z$ onto $X_0$? The colouring $f$ of $G'$ is what rules out the deficient
set $T$, but only if the colours $f$ places on $X_0$ are known to lie in $R(T')$. Copying $L(z)$
achieves  this: the vertices of $X_0$ have the same exterior neighbourhood $\NX$ as $z$,
so their $f$-colours lie in $L(z)\setminus f(\NX)=R(z)\subseteq R(T')$. Deleting $Y_0$ rather than
$X_0$ is what turns $X_0\cup U\cup V$ into a clique.
\subsection{ Notation}\label{sec:notation}
For a set $S$ of vertices and a list assignment $P$ we write $P(S)=\bigcup_{v\in S}P(v)$, with
$P(\varnothing)=\varnothing$. Two disjoint vertex sets $A,B$ are \emph{complete} to each other if
every vertex of $A$ is adjacent to every vertex of $B$. $G-S$ is the graph obtained by deleting
$S$, and $N(v)$ is the neighbourhood of $v$. Table~\ref{tab:notation} collects the notation of
the main argument for reference.
\begin{table}[t]
\centering\small
\begin{tabular}{@{}>{$}l<{$}p{106mm}@{}}
\toprule
G,\ L,\ k & the minimal counterexample, its bad $k$-lists, and the clique bound (\S\ref{sec:mincex})\\
H,\ \LH & the bipartite multigraph and its line graph in the Maffray--Reed representation of $G$\\
J=(X,Y) & an augment: $X$ and $Y$ are cliques, $X\cup Y=V(J)$\\
\eX=\pX c,\ \eY=c\pY & the two edges of $H$ forming the flat edge of $\LH$ replaced by $J$; $\deg_H c=2$\\
\NX,\ \NY & neighbours of $X$, resp.\ $Y$, outside $J$ (disjoint cliques, Lemma~\ref{lem:anatomy})\\
M,\ \mu & the matching of non-adjacent $X$--$Y$ pairs in an incomplete augment, and its size\\
X_0,Y_0;\ U,V & vertices of $X$, $Y$ covered by $M$ (\emph{matched}), resp.\ not covered (\emph{free})\\
r=|U|,\ s=|V| & so that $|X|=\mu+r$, $|Y|=\mu+s$, and $k=\mu+r+s$ (Lemma~\ref{lem:saturation})\\
\sur(S)=|L(S)|-|S| & the \emph{surplus} of a nonempty set $S$ with respect to $L$\\
m_X,\ m_Y & minimum surplus of a nonempty subset of $U$, resp.\ of $V$\\
S\ni z & a subset of $U$ of surplus $m_X$ and a vertex in it\\
G'=G-Y_0,\ L',\ f & the reduced graph, its lists ($L(z)$ on $X_0$, $L$ elsewhere), an $L'$-colouring\\
\FX=f(\NX),\ \FY=f(\NY) & the colours used by $f$ on the exterior neighbourhoods\\
R & residual lists on $J$: $L(x)\setminus\FX$ on $X$, $L(y)\setminus\FY$ on $Y$\\
T,\ T_U,\ T_V,\ T' & a deficient set of free vertices, its parts in $U$ and $V$, and $T\cup S$\\
\bottomrule
\end{tabular}
\caption{Notation used in proofs.}
\label{tab:notation}
\end{table}
\section{Main tools}\label{sec:tools}
\subsection{Hall's theorem and deficient sets}
Let $P$ be a list assignment on a finite set $W$ of vertices. For a nonempty $S\subseteq W$
we call $\sur_P(S)=|P(S)|-|S|$ the \emph{surplus} of $S$, and we say that $S$ is \emph{deficient} if $\sur_P(S)<0$, that is,
if $|P(S)|\le|S|-1$. Now the Hall's theorem says the following: 
\begin{lemma}[Hall \cite{Hall35}]\label{lem:hall}
Let $P$ be a list assignment on a finite set $W$. One can choose colours $c(w)\in P(w)$,
$w\in W$, that are pairwise distinct if and only if no nonempty subset of $W$ is deficient.
\end{lemma}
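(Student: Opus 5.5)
The plan is to view the statement as a matching problem in the bipartite incidence graph $B$. One side of $B$ is $W$, the other side is the colour set $C=P(W)$, and $w\in W$ is joined to $c\in C$ whenever $c\in P(w)$. A choice of pairwise distinct colours $c(w)\in P(w)$ is exactly a matching of $B$ that saturates $W$. For a subset $S\subseteq W$, its neighbourhood in $B$ is $P(S)$, so $S$ is deficient precisely when $|N_B(S)|<|S|$. The statement is therefore Hall's marriage theorem for $B$.

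\emph{Necessity.} Suppose distinct colours $c(w)\in P(w)$ exist, and let $S\subseteq W$ be nonempty. Then the map $w\mapsto c(w)$ sends $S$ injectively into $P(S)$. Hence $|P(S)|\ge|S|$, and $S$ is not deficient.

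\emph{Sufficiency.} Assume no nonempty subset of $W$ is deficient. I would argue by induction on $|W|$; the case $|W|=1$ is trivial, since $P(w)\neq\varnothing$. The induction splits into two cases.
\begin{enumerate}
\item Every nonempty proper subset $S\subsetneq W$ satisfies $|P(S)|\ge|S|+1$. Pick any $w\in W$ and any colour $c\in P(w)$. Delete $w$ and remove $c$ from all other lists. Each nonempty $S\subseteq W\setminus\{w\}$ loses at most one colour, so its new union still has size at least $|S|$. By induction, $W\setminus\{w\}$ can be coloured with distinct colours avoiding $c$. Giving $w$ the colour $c$ completes the choice.
\item Some nonempty proper subset $S_0\subsetneq W$ has $|P(S_0)|=|S_0|$. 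By induction, $S_0$ receives distinct colours, and these colours exhaust $P(S_0)$. On $W\setminus S_0$, use the lists $P(w)\setminus P(S_0)$. For nonempty $S\subseteq W\setminus S_0$, the non-deficiency of $S\cup S_0$ gives
\[
|P(S)\setminus P(S_0)| = |P(S\cup S_0)|-|P(S_0)| \ge |S|+|S_0|-|S_0| = |S|.
\]
So Hall's condition holds for the reduced lists, and induction colours $W\setminus S_0$ with colours outside $P(S_0)$. The union of the two choices is injective.
\end{enumerate}

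The only delicate point is the tight-set case, and specifically the identity $|P(S)\setminus P(S_0)|=|P(S\cup S_0)|-|P(S_0)|$, which transfers Hall's condition to the residual problem. Everything else is routine. Alternatively, one may simply cite \cite{Hall35} through the bipartite reformulation above.
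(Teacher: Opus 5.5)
Your proof is correct, and it does more than the paper does. The paper gives no argument for this lemma; it simply quotes Hall's theorem \cite{Hall35}, tacitly relying on the reformulation in your first paragraph: a choice of distinct colours is a matching of the incidence graph saturating $W$, and $P(S)$ is the neighbourhood of $S$.

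Your sufficiency argument is the standard Halmos--Vaughan induction, and both cases are sound. In the first case, every nonempty $S\subseteq W\setminus\{w\}$ is a proper subset of $W$, so it has at least $|S|+1$ colours before $c$ is deleted. In the second case, the residual union over $S\subseteq W\setminus S_0$ is exactly $P(S)\setminus P(S_0)$. Since $S$ and $S_0$ are disjoint, your bound follows from the non-deficiency of $S\cup S_0$. The remark that the colours on $S_0$ exhaust $P(S_0)$ is true but unnecessary: injectivity of the union only needs the colours on $W\setminus S_0$ to avoid $P(S_0)$.

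Having the proof is worthwhile, because the paper uses precisely this sufficiency direction, in contrapositive form. It appears in Corollary~\ref{cor:hall} and in the first line of the proof of Lemma~\ref{lem:deficiency}. The citation buys brevity; your argument buys self-containment. Note also that your first case (fix a colour $c$, delete it from the remaining lists, observe that each union loses at most one colour) is the same bookkeeping the paper uses in Case~2 of the Deficiency Lemma.
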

Pairwise distinct colours form a proper colouring of every graph on $W$, so:
\begin{corollary}\label{cor:hall}
If a graph with vertex set $W$ has no $P$-colouring, then some nonempty $S\subseteq W$ is
deficient.
\end{corollary}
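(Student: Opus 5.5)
The statement is the contrapositive of the sufficiency direction of Hall's theorem (Lemma~\ref{lem:hall}), so I will derive it in one step from that lemma. Let $Q$ be a graph with vertex set $W$ and let $P$ be a list assignment on $W$. Assume, for a contradiction, that no nonempty $S\subseteq W$ is deficient with respect to $P$, i.e.\ $|P(S)|\ge|S|$ for every nonempty $S\subseteq W$.

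By the ``if'' direction of Lemma~\ref{lem:hall}, there is then a choice of colours $c(w)\in P(w)$, $w\in W$, that are pairwise distinct. I will check that this $c$ is a $P$-colouring of $Q$. Each vertex $w$ receives a colour from its own list $P(w)$. For any edge $uv$ of $Q$ we have $u\neq v$, and therefore $c(u)\neq c(v)$ by pairwise distinctness. So $c$ is proper, contradicting the hypothesis that $Q$ has no $P$-colouring. Hence some nonempty $S\subseteq W$ satisfies $|P(S)|\le|S|-1$, which is exactly the definition of a deficient set.

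No step here is a genuine obstacle, since the corollary is immediate once Hall's theorem is assumed. The only thing to handle carefully is the degenerate case $W=\varnothing$. There the empty colouring is trivially proper, so the hypothesis ``no $P$-colouring'' never holds, and the requirement that $S$ be nonempty causes no difficulty. Note also that the argument does not use the structure of $Q$ at all: an injective choice from the lists colours every graph on $W$, including the complete graph. This is the form in which the corollary will later be applied to the augment $J$ with the residual lists $R$.
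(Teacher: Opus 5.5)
Your proof is correct and matches the paper's own argument: the paper also obtains the corollary from the ``if'' direction of Lemma~\ref{lem:hall}, observing that a choice of pairwise distinct colours from the lists is a proper colouring of every graph on $W$. Your extra remark on the degenerate case $W=\varnothing$ is harmless and consistent with this.
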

A graph is \emph{cobipartite} if its vertex set can be partitioned to two cliques; equivalently,
it is the complement of a bipartite graph.
\begin{lemma}\label{lem:konig}
Let $F$ be an $X \text{---} Y$ bipartite graph and let $J$ be its complement.  If $\nu$ is the maximum size of a matching in $F$, then
$\omega(J)=|X|+|Y|-\nu$.
\end{lemma}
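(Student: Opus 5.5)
The plan is to identify cliques of $J$ with independent sets of $F$ and then apply K\H{o}nig's theorem. Since $J$ is the complement of $F$, a set $Q\subseteq X\cup Y$ is a clique in $J$ exactly when it is independent in $F$. So $\omega(J)=\alpha(F)$, the independence number of the bipartite graph $F$, and the statement reduces to showing $\alpha(F)=|X|+|Y|-\nu$.

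I will prove the two inequalities separately. For the lower bound on $\alpha(F)$, I invoke K\H{o}nig's theorem: in a bipartite graph the minimum size $\tau$ of a vertex cover equals the maximum size $\nu$ of a matching. If $C$ is a minimum vertex cover, its complement $(X\cup Y)\setminus C$ contains no edge of $F$. It is therefore independent in $F$, hence a clique of $J$, of size $|X|+|Y|-\nu$.

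For the matching upper bound, let $Q$ be any clique of $J$ and $N$ a maximum matching of $F$. Each edge of $N$ is a non-adjacent pair in $J$, so $Q$ contains at most one endpoint of it. The $2\nu$ matched vertices contribute at most $\nu$ vertices to $Q$, and the remaining $|X|+|Y|-2\nu$ vertices contribute at most themselves. This gives $|Q|\le |X|+|Y|-\nu$.

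Combining the two bounds gives $\omega(J)=|X|+|Y|-\nu$. The only nontrivial ingredient is K\H{o}nig's theorem, which I quote rather than reprove (it also follows from Lemma~\ref{lem:hall} via the deficiency form of Hall's theorem). Apart from that there is no real obstacle; the one point needing care is the observation that $X$ and $Y$ are cliques in $J$, which is exactly why every edge of $F$ runs between $X$ and $Y$, so that $F$ is bipartite and K\H{o}nig applies.
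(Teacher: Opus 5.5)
Your proof is correct and follows essentially the paper's route: cliques of $J$ are exactly the sets avoiding every edge of $F$, i.e.\ complements of vertex covers of $F$, and K\H{o}nig's theorem turns $\tau$ into $\nu$. The only cosmetic difference is that you derive the upper bound $|Q|\le|X|+|Y|-\nu$ directly from a maximum matching, which is just the trivial inequality $\tau\ge\nu$, rather than quoting K\H{o}nig for both directions.
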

\begin{proof}
Since $X$ and $Y$ are cliques, a set $K\subseteq X\cup Y$ is a clique of $J$ if and only if it
contains no edge of $F$, that is, if and only if $(X\cup Y)\setminus K$ meets every edge of $F$
(is a \emph{vertex cover} of $F$). Hence $\omega(J)=|X|+|Y|-\tau$, where $\tau$ is the minimum size
of a vertex cover of $F$. By K\H{o}nig's theorem \cite{Kon31}, $\tau=\nu$ for bipartite graphs.
\end{proof}
The following special cobipartite graphs are  very useful in our proofs.
\begin{definition}\label{def:KXYM}
Let $X$ and $Y$ be disjoint finite sets and let $M$ be a \emph{matching between $X$ and $Y$}.
We write $\Kminus{X}{Y}{M}$ 
for the graph with vertex set $X\cup Y$ in which two distinct vertices are adjacent unless they
form a pair of $M$. 
We call the vertices covered by $M$ \emph{matched} and the others \emph{free}, and we use
the notation
\begin{gather*}
X_0=\{x\in X:\ x\text{ is matched}\},\qquad Y_0=\{y\in Y:\ y\text{ is matched}\},\\
U=X\setminus X_0,\qquad V=Y\setminus Y_0,\qquad r=|U|,\qquad s=|V|,
\end{gather*}
so that $|X_0|=|Y_0|=\mu$, $|X|=\mu+r$ and $|Y|=\mu+s$.
\end{definition}
A clique of $\Kminus{X}{Y}{M}$ contains at most one vertex of each pair of $M$, and $X_0\cup U\cup V$
is a clique; hence
\begin{equation}\label{eq:omegaKXYM}
\omega(\Kminus{X}{Y}{M})=\mu+r+s .
\end{equation}
\begin{lemma}[Deficiency Lemma]\label{lem:deficiency}
Let $J=\Kminus{X}{Y}{M}$, 
and let $R$ be a list
assignment on $X\cup Y$ with $|R(x)|\ge|X|\ \ (x\in X)\qquad\text{and}\qquad |R(y)|\ge|Y|\ \ (y\in Y).$ 
If $J$ has no $R$-colouring, then there is a set $T$ of free vertices, $T\subseteq U\cup V$,
that meets both $U$ and $V$ and satisfies
\begin{equation}\label{eq:witness}
|R(T)|\le|T|+\mu-1 .
\end{equation}
\end{lemma}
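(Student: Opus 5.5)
We argue by induction on $\mu$, following the two-case scheme sketched in the overview: a Hall-violating set either contains no whole pair of $M$, in which case it yields $T$ directly, or it contains a pair of $M$, which we remove together with a shared colour.

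\textbf{Base case $\mu=0$.} Here $J$ is a complete graph on $X\cup Y$. By Corollary~\ref{cor:hall}, failure of an $R$-colouring gives a deficient set $S$ with $|R(S)|\le|S|-1$.
\begin{itemize}
\item If $S\subseteq X$, then $|R(S)|\ge|X|\ge|S|$, a contradiction; similarly if $S\subseteq Y$.
\item Hence $S$ meets both $U=X$ and $V=Y$, and $T=S$ satisfies \eqref{eq:witness} with $\mu=0$.
\end{itemize}

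\textbf{Inductive step, $\mu\ge1$.} Suppose $J$ has no $R$-colouring. We first ask whether some pair $xy\in M$ has $R(x)\cap R(y)\neq\varnothing$.

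\emph{Case 1: such a pair exists.} Pick a colour $c\in R(x)\cap R(y)$ and set $X^*=X\setminus\{x\}$, $Y^*=Y\setminus\{y\}$, $M^*=M\setminus\{xy\}$. Define lists $R^*(w)=R(w)\setminus\{c\}$ on $X^*\cup Y^*$.
\begin{itemize}
\item The bounds $|R^*(w)|\ge|X^*|$ on $X^*$ and $|R^*(w)|\ge|Y^*|$ on $Y^*$ hold.
\item Every other vertex is adjacent to both $x$ and $y$. So an $R^*$-colouring of $K(X^*,Y^*)-M^*$, together with colour $c$ on both $x$ and $y$, would be an $R$-colouring of $J$. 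Hence no $R^*$-colouring exists.
\item The free vertices of the smaller graph are still $U$ and $V$, and its matching has size $\mu-1$. Induction gives $T\subseteq U\cup V$ meeting both $U$ and $V$ with $|R^*(T)|\le|T|+\mu-2$.
\item Since $|R(T)|\le|R^*(T)|+1$, we get $|R(T)|\le|T|+\mu-1$, as required.
\end{itemize}

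\emph{Case 2: $R(x)\cap R(y)=\varnothing$ for every pair $xy\in M$.} Any colouring from $R$ is then automatically injective on each missing pair. Since every other pair of vertices is adjacent, an $R$-colouring is exactly a system of distinct representatives. By Corollary~\ref{cor:hall} there is a deficient set $S$, so $|R(S)|\le|S|-1$.
\begin{itemize}
\item \emph{$S$ contains a whole pair $xy$.} Then $|R(S)|\ge|R(x)|+|R(y)|\ge|X|+|Y|=2\mu+r+s$. But $|S|\le 2\mu+r+s$, so $|R(S)|\ge|S|$, a contradiction.
\item \emph{$S$ contains no whole pair.} Let $a$ be the number of matched vertices in $S$, so $a\le\mu$, and let $T=S\cap(U\cup V)$. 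Then $|R(T)|\le|R(S)|\le|S|-1=|T|+a-1\le|T|+\mu-1$.
\end{itemize}

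It remains to check, in the second subcase, that $T$ meets both $U$ and $V$.
\begin{itemize}
\item If $T\cap V=\varnothing$, then $S$ uses at most one vertex from each pair together with some of $U$. Take $x\in S\cap X$ if one exists. Then $|R(S)|\ge|X|=\mu+r\ge|S\cap X|+|S\cap Y_0|$ fails only if\ldots; this is the delicate count.
\item The cleaner route is to apply Hall only to a \emph{minimal} deficient set, or equivalently to bound $|S|$. Since $S$ avoids whole pairs, $|S\cap X_0|+|S\cap Y_0|\le\mu$. So if $S\subseteq X_0\cup Y_0\cup U$ then $|S|\le\mu+r=|X|$.
\item If moreover $S$ contains a vertex of $X$, its list alone has size at least $|X|\ge|S|$, a contradiction.
\item Otherwise $S\subseteq Y_0$, and then $|S|\le\mu\le|Y|$, again a contradiction.
\item The symmetric argument handles $T\cap U=\varnothing$.
\end{itemize}

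\textbf{Main obstacle.} The step needing most care is this last verification that $T$ meets both sides. One must check that the bound $|S|\le\mu+r$ (respectively $\mu+s$) is compared against a list of the right size, which is handled by the subcase split above. The rest is routine bookkeeping in the inductive step, namely that deleting the colour $c$ preserves the size hypotheses exactly.
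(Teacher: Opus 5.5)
Your proof is correct and is essentially the paper's argument: induction on $\mu$, removing a missing pair together with a shared colour, and otherwise taking a deficient set that contains no whole pair, intersecting it with the free vertices, and using the same size count ($|S|\le\mu+r=|X|$, respectively $\mu+s=|Y|$) to show it meets both $U$ and $V$. The only difference is cosmetic: you split on whether some pair of $M$ has intersecting lists, whereas the paper splits on whether the deficient set contains a whole pair and then derives the shared colour from $|R(x)\cap R(y)|\ge|X|+|Y|-(|W|-1)\ge1$. In a write-up, delete the unfinished first bullet of your final verification, because the ``cleaner route'' that follows it already completes that step.
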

The lemma essentially says that a failure to colour is always witnessed by free vertices on \emph{both} sides, and
the witness is deficient ``up to the $\mu$ missing pairs''.
\begin{proof}
We use induction on $\mu$. By Halls' theorem, there is a nonempty $W\subseteq X\cup Y$
with $|R(W)|\le|W|-1$. Such a $W$ meets both $X$ and $Y$: if, say, $W\subseteq X$, then any
$x\in W$ gives $|R(W)|\ge|R(x)|\ge|X|\ge|W|$. We have the following cases. 

\emph{Case 1: $W$ contains no pair of $M$.}
Then $W$ contains at most one vertex from each pair of $M$, so at most $\mu$ matched vertices.
Put $T=W\cap(U\cup V)$. Then $|W|\le|T|+\mu$ and
\[|R(T)|\le|R(W)|\le|W|-1\le|T|+\mu-1 .\]
We now show that $T$ meets both $U$ and $V$. If $T\cap U=\varnothing$, then $W$ consists of
at most $\mu$ matched vertices together with vertices of $V$, so $|W|\le\mu+s=|Y|$. Since $W$ meets
$Y$, we have $|R(W)|\ge|Y|\ge|W|$, a contradiction. Symmetrically, $T\cap V\ne\varnothing$.
When $\mu=0$ there are no pairs at all, and the base case holds. 

\emph{Case 2: $W$ contains both vertices $x,y$ of some pair of $M$.}
Since $R(x)\cup R(y)\subseteq R(W)$,
\[
|R(x)\cap R(y)|\ \ge\ |R(x)|+|R(y)|-|R(W)|\ \ge\ |X|+|Y|-(|W|-1)\ \ge\ 1 .
\]
Choose a colour $c\in R(x)\cap R(y)$. The graph $J'=J-\{x,y\}$ is $\Kminus{X\setminus x}{Y\setminus y}{(M\setminus\{xy\})}$.
It has $\mu-1$ missing pairs and the same free sets $U$ and $V$. Assign it the lists
$R'(w)=R(w)\setminus\{c\}$, which have at least $|X|-1=|X\setminus x|$ colours on $X\setminus x$ and at least
$|Y|-1$ colours on $Y\setminus y$. If $J'$ had an $R'$-colouring, giving $x$ and $y$ the colour $c$ would
extend it to an $R$-colouring of $J$ ($x$ and $y$ are non-adjacent, and no other vertex uses $c$).
So $J'$ has none, and by induction there is $T\subseteq U\cup V$ meeting both $U$ and $V$ with
$|R'(T)|\le|T|+(\mu-1)-1$. Restoring $c$ adds at most one colour: $|R(T)|\le|R'(T)|+1\le|T|+\mu-1$.
\end{proof}
\begin{corollary}\label{cor:deficiency}
In the following special cases of  Lemma~\ref{lem:deficiency}, $J$ has an $R$-colouring:
\begin{enumerate}[label=\textup{(\alph*)}]
\item $U=\varnothing$ or $V=\varnothing$;
\item every vertex of $U$, or every vertex of $V$, has a list of at least $\omega(J)=\mu+r+s$ colours.
\end{enumerate}
\end{corollary}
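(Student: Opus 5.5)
Both parts follow by contraposition from Lemma~\ref{lem:deficiency}. Throughout we assume its hypotheses, namely $J=\Kminus{X}{Y}{M}$ and lists $R$ with $|R(x)|\ge|X|$ on $X$ and $|R(y)|\ge|Y|$ on $Y$. We suppose that $J$ has no $R$-colouring. The lemma then supplies a set $T\subseteq U\cup V$ of free vertices that meets both $U$ and $V$ and satisfies $|R(T)|\le|T|+\mu-1$. In each case we derive a contradiction.

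For (a), the conclusion is immediate. If $U=\varnothing$, no set of free vertices can meet $U$, so $T$ cannot exist. The case $V=\varnothing$ is symmetric.

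For (b), assume every vertex of $U$ has a list of at least $\mu+r+s$ colours; the case of $V$ is symmetric. Pick $u\in T\cap U$, which exists because $T$ meets $U$. Since $T$ consists of free vertices, $|T|\le|U|+|V|=r+s$. Hence
\[
\mu+r+s\le|R(u)|\le|R(T)|\le|T|+\mu-1\le r+s+\mu-1,
\]
which is a contradiction. So $J$ has an $R$-colouring.

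The argument has no real obstacle. The only points to check are that $T$ avoids matched vertices, which gives $|T|\le r+s$, and that $T$ meets the relevant side, which is guaranteed by the ``meets both $U$ and $V$'' clause of Lemma~\ref{lem:deficiency}.
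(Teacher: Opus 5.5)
Your proof is correct and follows the paper's argument essentially verbatim: (a) because the witness $T$ from Lemma~\ref{lem:deficiency} must meet both $U$ and $V$, and (b) via the chain $\mu+r+s\le|R(u)|\le|R(T)|\le|T|+\mu-1\le r+s+\mu-1$ for a vertex $u\in T$ on the side with large lists. You also state explicitly the bound $|T|\le r+s$, which the paper uses only implicitly.
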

\begin{proof}
A set $T$ as in the lemma meets both $U$ and $V$, which is impossible in case (a). In case (b),
$T$ contains a vertex $w$ with $|R(w)|\ge\mu+r+s\ge\mu+|T|>|T|+\mu-1\ge|R(T)|\ge|R(w)|$, which is
absurd.
\end{proof}
\section{Main Results: Elementary graphs}\label{sec:elementary}
The \emph{line graph} $L(H)$ of a multigraph $H$ has vertex set $E(H)$, two distinct edges of
$H$ being adjacent in $L(H)$ when they share an endpoint. The edges of $H$ at a fixed vertex
form a clique of $L(H)$, the \emph{star} of that vertex. An edge of a graph is \emph{flat} if it
lies in no triangle.
\begin{definition}[Maffray--Reed \cite{MR99}]\label{def:augment}
Let $xy$ be a flat edge of a graph $G_0$, and let $B$ be a cobipartite graph, disjoint from
$G_0$, whose vertex set is partitioned into two nonempty cliques $X$ and $Y$ with at least one
edge between $X$ and $Y$. To \emph{augment} $xy$ by $B$: delete $x$ and $y$, add $B$, and make
$X$ complete to $N(x)\setminus\{y\}$ and $Y$ complete to $N(y)\setminus\{x\}$. We call $B$, or the
pair $(X,Y)$, the \emph{augment} replacing $xy$, and $X,Y$ its two \emph{sides}.
If $x_1y_1,\dots,x_hy_h$ are pairwise vertex-disjoint flat edges of $G_0$, augmenting each of them
(by pairwise disjoint augments) gives a well-defined graph: after one augmentation the remaining
edges $x_iy_i$ are still flat, and the operations commute \cite[\S1]{MR99}. Any graph so obtained,
including $G_0$ itself ($h=0$), is an \emph{augmentation} of $G_0$.
\end{definition}
Note that adjacencies between different augments are inherited from the replaced vertices:
if $x_i$ is adjacent to $x_j$ in $G_0$, then the sides $X_i$ and $X_j$ are complete to each other.
\begin{theorem}[Maffray--Reed {\cite{MR99}}]\label{thm:MR}
A graph is elementary if and only if it is an augmentation of the line graph of a bipartite
multigraph.
\end{theorem}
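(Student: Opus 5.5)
The statement is the characterisation theorem of Maffray and Reed \cite{MR99}, and the paper only uses it, so a full reproof is not needed. Still, the plan I would follow to prove it is this: check the direction ``augmentation $\Rightarrow$ elementary'' by writing down an explicit red/blue edge colouring, and obtain the converse by analysing the monochromatic structure of an elementary colouring.

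\emph{Augmentations are elementary.} Let $H$ be bipartite with parts $A,B$. In $\LH$, colour an edge $ef$ red if $e,f$ share an endpoint in $A$ and blue if they share one in $B$; parallel edges share both ends, so colour them arbitrarily. Take an induced path $e\!-\!f\!-\!g$. If $f$ were parallel to $e$, then $g$ would be adjacent to $e$, so $f$ is parallel to neither. Hence $e$ and $g$ meet $f$ at its two distinct ends, one in $A$ and one in $B$, and the two edges get different colours.

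Now augment a flat edge $xy$. Here $x=\eX=\pX c$ and $y=\eY=c\pY$ with $\deg_H c=2$, so $\pX,\pY$ lie in one part and $c$ in the other. Flatness forces every neighbour of $x$ other than $y$ to contain $\pX$, and every neighbour of $y$ other than $x$ to contain $\pY$. Colour the edges as follows:
\begin{itemize}
\item edges inside $X$ and edges from $X$ to $\NX$ receive the colour of the part containing $\pX$;
\item edges inside $Y$ and edges from $Y$ to $\NY$ receive the same colour, since $\pY$ lies in that part too;
\item all $X$--$Y$ edges receive the opposite colour, namely that of the part containing $c$.
\end{itemize}
Then check every type of induced $P_3$ meeting the augment. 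Two vertices of one side are twins outside $J$, so an induced path cannot use two vertices of $X$ as its endpoints together with an outside middle vertex. Paths such as $u\!-\!x'\!-\!y'$ with $u\in\NX$ get two different colours, and $x_1\!-\!x_2\!-\!y'$ is coloured ``side, cross''. Several augments are handled independently, since the colouring of each augment changes only edges incident to it. These checks are routine.

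\emph{Elementary graphs are augmentations.} Fix an elementary colouring of $G$ and first reduce to $G$ connected and without clique cutsets of a trivial kind. The key observation is that if $uv$ and $vw$ have the same colour, then $uw$ is an edge. Hence every monochromatic component spans a clique, and the maximal red cliques and the maximal blue cliques play the roles of the stars at $A$ and at $B$. Next I would build $H$: take one vertex for each maximal red clique and each maximal blue clique, and one edge for each vertex of $G$, joining the two cliques that contain it.

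When every vertex lies in exactly one red and one blue clique and no edge is ``doubly coloured'', this gives $G=\LH$ directly. The obstacle, and the main work, is the set of vertices where this fails. These are the places where a red edge and a blue edge join two cliques that are not complete to each other, or where an edge could be recoloured. I would show that such configurations occur exactly inside cobipartite pieces $(X,Y)$. Each piece has homogeneous exterior neighbourhoods: $X$ is complete to one clique and anticomplete to the rest, and likewise $Y$. Contracting each side to a single vertex then yields a flat edge of a graph which, by the previous paragraph, is a line graph of a bipartite multigraph. Proving this homogeneity, and the disjointness of the pieces, is where I expect the difficulty to lie, and I would follow the case analysis of \cite{MR99} for it.
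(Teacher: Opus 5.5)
The paper gives no proof of this theorem. It imports it from \cite{MR99} and uses it as a black box; Lemma~\ref{lem:anatomy} only normalises the representation. So relying on \cite{MR99} for the hard direction matches the paper.

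Your ``if'' direction is a sound self-contained supplement. You colour each edge of $G$ by the class of the bipartition containing the common end, in $H$, of the corresponding edges, with the two sides of an augment behaving like $\pX,\pY$ and the $X$--$Y$ edges like $c$. Two details need tightening.
\begin{enumerate}
\item A flat edge of $\LH$ may also be a pair of parallel edges forming a component of $H$. Either normalise this away as in Lemma~\ref{lem:anatomy}, or note that the augment is then a cobipartite component, which is elementary directly.
\item Augments are not literally independent. If a side $X'$ of another augment lies in $\NX$, the $X$--$X'$ edges are incident to both augments. The two rules agree, since both give the class of the common end $\pX=p_{X'}$. A path $x\!-\!u\!-\!w$ with $u\in X'$ and $w\in Y'$ is still bicoloured, because $c'$ lies opposite $\pX$.
\end{enumerate}

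The genuine gap is in your sketch of the ``only if'' direction. From ``$uv,vw$ of the same colour force $uw\in E(G)$'' you cannot conclude that every monochromatic component spans a clique. The edge $uw$ may carry the other colour, so the implication does not iterate. All that follows is local: at each vertex, the red neighbours form a clique and so do the blue ones.

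A small counterexample is the diamond on $a,b,c,d$ with $ad$ missing, with $ab,bc,cd$ red and $ac,bd$ blue. This is a valid elementary colouring whose red component is not a clique.

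More decisively, the claim fails for \emph{every} elementary colouring of some elementary graphs. Suppose both colour classes had clique components. Then your own construction, with one vertex of $H$ per red and per blue component, gives $G\cong L(H)$ with $H$ bipartite. Every elementary graph would then be such a line graph, and augments would be superfluous. Now take the wheel $W_4$: a $4$-cycle $x_1x_2y_1y_2$ plus a hub $u$ adjacent to all of it. This is $\Kminus{X}{Y}{M}$ with $X=\{u,x_1,x_2\}$, $Y=\{y_1,y_2\}$ and $\mu=2$. It is cobipartite, hence elementary. It is not the line graph of a bipartite multigraph. The induced rim would have to be the edge set of a $4$-cycle of $H$, and the hub, meeting all four of those edges, would have to be a diagonal of that cycle. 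That is impossible in a bipartite $H$.

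So your sketch is internally inconsistent: the property you assert would remove the very ``obstacle'' you go on to describe. The substantive direction, which the paper needs to represent its counterexample, rests entirely on the citation and not on your plan. The reduction to graphs without clique cutsets would also need its own gluing argument, since elementary graphs are not closed under clique-sums: the claw is a clique-sum of three copies of $K_2$.
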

Both directions will be used: ``only if'' provides a representation of our counterexample, and
``if'' guarantees that modifying the edges inside an augment (Lemma~\ref{lem:saturation}) keeps
the graph elementary. Figure~\ref{fig:augment} illustrates the construction.
\begin{figure}[t]
\centering
\begin{tikzpicture}[x=1cm,y=1cm]
\begin{scope}
 \node[lbl] at (1.4,2.05) {(a) a bipartite multigraph $H$};
 \node[vtx,label={[slbl]above:$\pX$}] (pX) at (0,1.2) {};
 \node[vtx,label={[slbl]above:$\pY$}] (pY) at (2.6,1.2) {};
 \node[vtx,label={[slbl]above:$q$}]  (q)  at (3.9,1.2) {};
 \node[vtx,label={[slbl]below:$d_1$}] (d1) at (-1.3,0) {};
 \node[vtx,label={[slbl]below:$d_2$}] (d2) at (-0.3,0) {};
 \node[vtx,label={[slbl]below:$c$}]   (c)  at (1.3,0) {};
 \node[vtx,label={[slbl]below:$d_3$}] (d3) at (3.25,0) {};
 \draw (pX)--(d1) node[midway,left=1pt,slbl]{$g_1$};
 \draw (pX)--(d2) node[midway,right=1pt,slbl]{$g_2$};
 \draw[flat] (pX)--(c) node[midway,above,sloped,slbl]{$\eX$};
 \draw[flat] (c)--(pY) node[midway,above,sloped,slbl]{$\eY$};
 \draw (pY)--(d3) node[midway,left=1pt,slbl]{$h_1$};
 \draw (q)--(d3)  node[midway,right=1pt,slbl]{$h_2$};
\end{scope}
\begin{scope}[xshift=7.2cm]
 \node[lbl] at (1.3,2.05) {(b) its line graph $L(H)$};
 \node[vtx,label={[slbl]left:$g_1$}] (g1) at (-1.0,0.75) {};
 \node[vtx,label={[slbl]left:$g_2$}] (g2) at (-1.0,-0.75) {};
 \node[vtx,label={[slbl]below:$\eX$}] (eX) at (0,0) {};
 \node[vtx,label={[slbl]below:$\eY$}] (eY) at (1.5,0) {};
 \node[vtx,label={[slbl]below:$h_1$}] (h1) at (2.6,0) {};
 \node[vtx,label={[slbl]below:$h_2$}] (h2) at (3.7,0) {};
 \draw (g1)--(g2); \draw (g1)--(eX); \draw (g2)--(eX);
 \draw[flat] (eX)--(eY) node[midway,above,slbl]{flat};
 \draw (eY)--(h1); \draw (h1)--(h2);
\end{scope}
\begin{scope}[yshift=-4.6cm,xshift=2.3cm]
 \node[lbl] at (2.0,2.2) {(c) the augmentation of $L(H)$ along $\eX\eY$ by an augment $(X,Y)$};
 \node[vtx] (g1) at (-1.3,0.6) {}; \node[vtx] (g2) at (-1.3,-0.6) {};
 \node[vtx] (x1) at (0.4,0.5) {};  \node[vtx] (x2) at (0.4,-0.5) {};
 \node[vtx] (y1) at (2.2,0.75) {}; \node[vtx] (y2) at (2.65,0) {}; \node[vtx] (y3) at (2.2,-0.75) {};
 \node[vtx] (h1) at (4.1,0) {};    \node[vtx,label={[slbl]below:$h_2$}] (h2) at (5.3,0) {};
 \begin{scope}[on background layer]
  \node[clique,fit=(g1)(g2),label={[lbl]above:$\NX$}] {};
  \node[clique,fill=blue!8,fit=(x1)(x2),label={[lbl]above:$X$}] {};
  \node[clique,fill=blue!8,fit=(y1)(y2)(y3),label={[lbl]above:$Y$}] {};
  \node[clique,fit=(h1),label={[lbl]above:$\NY$}] {};
 \end{scope}
 \draw (g1)--(g2); \draw (g1)--(x1); \draw (g1)--(x2); \draw (g2)--(x1); \draw (g2)--(x2);
 \draw (x1)--(x2);
 \draw (y1)--(y2); \draw (y2)--(y3); \draw (y1)--(y3);
 \draw[flat] (x1)--(y1); \draw[flat] (x1)--(y2); \draw[flat] (x2)--(y2); \draw[flat] (x2)--(y3);
 \draw (h1)--(y1); \draw (h1)--(y2); \draw (h1)--(y3); \draw (h1)--(h2);
\end{scope}
\end{tikzpicture}
\caption{Augmenting a flat edge. In (a), $c$ has degree $2$, so $\eX\eY$ is a flat edge of
$L(H)$ in (b). In (c), $\eX$ is replaced by a clique $X$ and $\eY$ by a clique $Y$; $X$ inherits
the neighbours $\NX=\{g_1,g_2\}$ of $\eX$, $Y$ inherits the neighbour $\NY=\{h_1\}$ of $\eY$, and
the edges between $X$ and $Y$ (thick) are those of the augment. Every vertex of $X$ has the
same neighbourhood $\NX$ outside $J=X\cup Y$; no vertex outside $J$ sees both $X$ and~$Y$.}
\label{fig:augment}
\end{figure}
\subsection*{The structure of an augment}
We note some useful structural properties of an augmentation of the line graph of a bipartite multigraph. Fix a bipartite multigraph $H$, a set of pairwise vertex-disjoint flat edges of $\LH$, and an
augment for each of them. Call this a \emph{representation} of the resulting
augmentation $G$. For an augment $J=(X,Y)$ of the representation, let  $\NX=\bigl(\textstyle\bigcup_{x\in X}N(x)\bigr)\setminus V(J), \text{ and }
\NY=\bigl(\textstyle\bigcup_{y\in Y}N(y)\bigr)\setminus V(J)$ be the sets of neighbours of $X$,  $Y$ outside $J$ (in $G$).
\begin{lemma}\label{lem:anatomy}
Let $G$ be an augmentation of $\LH$ for a bipartite multigraph $H$. The representation can be
chosen so that every augmented flat edge $\eX\eY$ of $\LH$ consists of two non-parallel edges
$\eX=\pX c$ and $\eY=c\pY$ of $H$ whose common endpoint $c$ has degree $2$ in $H$. Moreover, for
every augment $J=(X,Y)$ of the representation:
\begin{enumerate}[label=\textup{(\alph*)}]
\item $X$ is complete to $\NX$ and $Y$ is complete to $\NY$: every vertex of $X$ has exactly the
      neighbours $\NX$ outside $J$, and every vertex of $Y$ exactly $\NY$;
\item $X\cup\NX$ and $Y\cup\NY$ are cliques of $G$;
\item $\NX\cap\NY=\varnothing$, i.e.\ no vertex outside $J$ has neighbours in both $X$ and $Y$.
      Consequently, every clique of $G$ that meets both $X$ and $Y$ is contained in $V(J)$.
\end{enumerate}
Statements \textup{(a)--(c)} remain true if the set of edges between $X$ and $Y$ is replaced by
any other nonempty set of $X$--$Y$ edges.
\end{lemma}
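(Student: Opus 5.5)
The plan is to prove the normalisation claim first and then read off (a)--(c) from the structure of line graphs, using only local information at the degree-$2$ vertex $c$.

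\textbf{Normalising the representation.} Let $\eX\eY$ be an augmented flat edge of $\LH$. Since $\eX,\eY$ are adjacent in $\LH$, they share an endpoint $c$ of $H$. They are not parallel. If they were, every other edge at either common endpoint would form a triangle with them. So by flatness both common endpoints would have degree exactly $2$, and $\{\eX,\eY\}$ would be a component of $H$. In that case replace the double edge by a path $p_X c\, p_Y$ on two new vertices: the line graph is unchanged (a single edge), and $H$ stays bipartite.

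So write $\eX=\pX c$, $\eY=c\pY$ with $\pX\ne\pY$. Any third edge at $c$ would be adjacent to both $\eX$ and $\eY$, giving a triangle, so $\deg_H c=2$. Different augmented flat edges are vertex-disjoint in $\LH$, so the modifications are independent.

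\textbf{Describing $\NX$ and $\NY$.} In $\LH$, the neighbours of $\eX$ other than $\eY$ are exactly the other edges at $\pX$, since $c$ contributes only $\eY$. Call this set $S_X$; it is a clique (part of the star of $\pX$). Similarly $S_Y$ consists of the other edges at $\pY$. Moreover $S_X\cap S_Y=\varnothing$. An edge in both would join $\pX$ and $\pY$. But $\pX$ and $\pY$ lie on the same side of the bipartition, both being neighbours of $c$, so no such edge exists.

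\textbf{Passing to $G$.} After augmentation, some members of $S_X$ may themselves have been replaced by sides of other augments. By Definition~\ref{def:augment} and the remark following it, every vertex of $X$ is adjacent outside $J$ precisely to the vertices that replaced members of $S_X$ (or to those members themselves). Hence $\NX$ is the union of these and every vertex of $X$ is complete to it, which gives (a).

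For (b), I need $\NX$ to be a clique. Two members of $S_X$ are adjacent in $\LH$. Distinct augmented flat edges are vertex-disjoint, and adjacencies between augments are inherited, so the replacements remain complete to each other. This also covers two vertices coming from the same replaced element. The one exception would be two vertices from the two different sides of one other augment, i.e.\ an other flat edge $e'_Xe'_Y$ lying entirely inside $S_X$. That case cannot occur: $e'_X,e'_Y\in S_X$ both lie in the star of $\pX$ together with $\eX$, which would make $e'_Xe'_Y$ lie in a triangle, contradicting flatness. Combined with $X$ being a clique, this gives (b).

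For (c), the disjointness $S_X\cap S_Y=\varnothing$ gives $\NX\cap\NY=\varnothing$, again because distinct elements are replaced by disjoint vertex sets. Now take a clique $K$ containing $x\in X$, $y\in Y$ and some vertex $w\notin V(J)$. Then $w\in\NX\cap\NY$, which is impossible, so $K\subseteq V(J)$.

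\textbf{Robustness.} The final sentence follows because none of the arguments above used the particular $X$--$Y$ edge set, only that it is nonempty, which is what Definition~\ref{def:augment} requires.

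\textbf{Main obstacle.} The delicate step is the bookkeeping in (b): ensuring that, when neighbouring elements of $S_X$ are themselves augmented, the replacement sets are complete to one another. This is where the flatness argument excluding a whole other augment inside the star of $\pX$ is needed.
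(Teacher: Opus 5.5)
Your proof is correct and follows the paper's argument essentially step by step. It uses the same normalisation of a parallel flat edge into a path through a degree-$2$ vertex, the same identification of the outside neighbours with the other edges at $\pX$ (resp.\ $\pY$), disjointness from bipartiteness, and the same flatness argument that no other augmented flat edge has both ends in that star. The only cosmetic difference is in (a): you read it off from inherited adjacencies, whereas the paper uses commutativity to perform the augmentation of $\eX\eY$ last and applies the definition directly.
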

\begin{proof}
Let $ee'$ be a flat edge of $\LH$.  Then, $e$ and $e'$ share an endpoint. Suppose that they are
not parallel, say $e=pc$ and $e'=cp'$ with $p\ne p'$. A third edge at $c$ would be adjacent to
both $e$ and $e'$, and $pp'$ is not an edge of $H$ because $p$ and $p'$, both neighbours of $c$, lie
in the same part of the bipartition. Hence $ee'$ is flat if and only if $c$ has degree~$2$.
Another possibility is that $e,e'$ are parallel edges joining $p$ and $c$. Any other edge at $p$ or at
$c$ is adjacent to both, so $ee'$ is flat if and only if $p$ and $c$ both have degree $2$, that is,
if and only if $\{e,e'\}$ is a  component of $H$. In that case replace this component by a
path $p\,c\,p'$ with a new vertex $p'$: the line graph is unchanged. 
Fix an augmented flat edge $\eX\eY$ with $\eX=\pX c$, $\eY=c\pY$ and $\deg c=2$, and its augment
$J=(X,Y)$. In $\LH$, the neighbours of $\eX$ other than $\eY$ are exactly the other edges of $H$
at $\pX$; they form a clique $A_X$ (part of the star of $\pX$), and $A_X\cup\{\eX\}$ is a clique.
Define $A_Y$ at $\pY$ likewise. Then $A_X\cap A_Y=\varnothing$, since an edge in both would join
$\pX$ to $\pY$. \par 
Since augmentations commute, we may perform the augmentation of $\eX\eY$ last. Let $gg'$ be
another augmented flat edge. It has at most one end in $A_X$: neither end is $\eX$ (the flat edges
are vertex-disjoint), and if both $g,g'\in A_X$ then $g,g',\eX$ would be a triangle through $gg'$.
Consequently, after all the other augmentations have been performed, every vertex of $A_X$ is
either still present or has been replaced by \emph{one side} of an augment. By
Definition~\ref{def:augment}, that side is a clique complete to every neighbour of the replaced
vertex other than its partner in its own flat edge, and the partner lies outside $A_X\cup\{\eX\}$;
so the side is complete to $\eX$ and to (what has become of) the rest of $A_X$. Thus, immediately
before $\eX\eY$ is augmented, the neighbours of $\eX$ other than $\eY$ form a clique; by the rule in
Definition~\ref{def:augment}, $X$ is made complete to exactly this set, which is therefore $\NX$.
Since $X$ is a clique complete to the clique $\NX$, this proves (a) and (b) for $X$, and
symmetrically for $Y$.
For (c), a vertex of $\NX$ is either an unaugmented vertex of $A_X$ or lies in the side replacing a
vertex of $A_X$; likewise for $\NY$; and a side of an augment replaces a single vertex. Since
$A_X\cap A_Y=\varnothing$, we get $\NX\cap\NY=\varnothing$. A clique containing $x\in X$ and
$y\in Y$ cannot contain a vertex outside $J$, as such a vertex would lie in $\NX\cap\NY$.
Nothing in the above argument used the edges between $X$ and $Y$, hence any nonempty set of edges between $X,Y$ works.
\end{proof}
We now study the minimal counterexample.
\section*{The minimal counterexample}\label{sec:mincex}
We prove Theorem~\ref{thm:main} in the following form.
\begin{theorem}\label{thm:k}
For every integer $k\ge1$, every elementary graph with clique number at most $k$ is $k$-choosable.
\end{theorem}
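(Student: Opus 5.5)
We argue by minimal counterexample, following the outline of \S\ref{sec:overview}. Fix $k$ and let $G$ be an elementary graph with $\omega(G)\le k$ that is not $k$-choosable. Choose $G$ with the fewest vertices and, subject to that, the most edges among such graphs on $|V(G)|$ vertices. Let $L$ be a $k$-list assignment with no $L$-colouring. By Theorem~\ref{thm:MR} and Lemma~\ref{lem:anatomy}, $G$ has a representation as an augmentation of $\LH$ in which every augmented flat edge is $\pX c$, $c\pY$ with $\deg_H c=2$.

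\textbf{All augments complete.} If every augment $J=(X,Y)$ is complete, we replace the path $\pX c\pY$ in $H$ by $|X|$ parallel copies of $\pX c$ and $|Y|$ parallel copies of $c\pY$. By Lemma~\ref{lem:anatomy}(a), the line graph of the resulting bipartite multigraph $H'$ is exactly $G$. Its stars are cliques of $G$, so $\Delta(H')\le\omega(G)\le k$. Galvin's theorem then makes $G$ $k$-choosable, a contradiction.

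\textbf{Saturation.} Now suppose some augment $J$ is incomplete. We first show that $J$ is saturated. Adding any $X$--$Y$ edge keeps $G$ elementary by Theorem~\ref{thm:MR} and the last sentence of Lemma~\ref{lem:anatomy}. By edge-maximality, every such addition must raise the clique number above $k$, since otherwise the larger graph would be a counterexample with more edges. By Lemma~\ref{lem:anatomy}(c), only cliques inside $V(J)$ can grow. Lemma~\ref{lem:konig} then gives $\omega(J)=|X|+|Y|-\nu=k$, and adding any non-edge lowers $\nu$. We deduce that the complement bipartite graph $F$ has every edge in every maximum matching. This forces $F$ to be a matching $M$ of size $\mu\ge1$: an edge of $F$ outside a maximum matching, or two $F$-edges at a vertex, would contradict this. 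So $J=\Kminus XYM$ and $k=\mu+r+s$.

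Since $X\cup\NX$ is a clique containing the clique $X_0\cup U$, we get $|\NX|\le k-|X|=s$. Symmetrically $|\NY|\le r$. If $U$ or $V$ is empty, we can delete $J$, colour $G-V(J)$ by minimality, and colour $J$ from residual lists by Corollary~\ref{cor:deficiency}(a). Hence $r,s\ge1$.

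\textbf{Reduction.} Let $S\subseteq U$ be a nonempty set of minimum surplus $m_X$, and fix $z\in S$. Put $G'=G-Y_0$. This graph is elementary: if $Y_0\ne Y$, its $X$--$V$ edges still form an augment, and if $Y_0=Y$, then $X$ behaves like a blown-up vertex. Moreover $\omega(G')\le k$. Give every vertex of $X_0$ the list $L(z)$, keep $L$ elsewhere, and take an $L'$-colouring $f$ of $G'$ from minimality. Define residual lists on $J$ by $R(x)=L(x)\setminus\FX$ and $R(y)=L(y)\setminus\FY$. Then $|R(x)|\ge k-s=|X|$ and $|R(y)|\ge|Y|$.

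If $J$ has no $R$-colouring, Lemma~\ref{lem:deficiency} yields $T$ meeting $U$ and $V$ with $|R(T)|\le|T|+\mu-1$. We then enlarge to $T'=T\cup S$ and need $|R(T')|\le|T'|+\mu-1$. Let $T_U=T\cap U$. The plan is to use the minimum surplus of $S$, via submodularity of $A\mapsto|L(A)|$ on subsets of $U$: we have $\sur(S\cup T_U)+\sur(S\cap T_U)\le\sur(S)+\sur(T_U)$, with $\sur(S\cap T_U)\ge m_X$ when the intersection is nonempty. This should bound the colours added by $S$ by $|S\setminus T|$. The complication is that we must pass from $L$ to $R$, since $\FX$ is subtracted uniformly on $X$. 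One checks that $R(A)=L(A)\setminus\FX$ on $U$ and $R(T')=R(T)\cup R(S\setminus T)$, and the same inequality should survive after subtracting $\FX$.

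\textbf{Contradiction.} Once $T'$ is obtained, $f$ is injective on the clique $X_0\cup U\cup V$ of $G'$. The colours of $X_0$ lie in $L(z)\setminus\FX=R(z)\subseteq R(T')$, because $X_0$ and $z$ share the neighbourhood $\NX$. The vertices of $T'\cap V$ avoid $\FY$, so their colours lie in $R(T')$. This puts $\mu+|T'|$ distinct colours into $R(T')$, contradicting $|R(T')|\le|T'|+\mu-1$. Hence $f$ extends to $J$, and $G$ is $L$-colourable, the final contradiction.

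\textbf{Main obstacle.} I expect the enlargement step to be the hardest part. In particular, when $S\cap T_U=\varnothing$, the submodularity argument needs care to show that deficiency survives the residual subtraction.
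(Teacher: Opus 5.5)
Your outline matches the paper's up to the enlargement step. Your saturation argument (add one $X$--$Y$ non-edge at a time, so every edge of $F$ lies in every maximum matching) and your use of Corollary~\ref{cor:deficiency}(a) to get $r,s\ge1$ are valid variants. The genuine gap is the case $T\cap S=\varnothing$. You flag it but do not resolve it, and it cannot be resolved by more careful bookkeeping. When $T\cap S=\varnothing$, submodularity gives nothing, and $S$ may contribute up to $|L(S)|=|S|+m_X$ new colours. Here $m_X\ge k-r=\mu+s\ge2$, so $T\cup S$ need not be deficient. Without $z\in T'$, the colours $f$ puts on $X_0$ lie in $R(z)$, which need not be inside $R(T)$, so you also get no contradiction from the clique $X_0\cup U\cup V$. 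This case has to be ruled out, not handled.

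The missing idea is to choose the side in advance. The set $T$ depends on $f$, and $f$ depends on which side you delete and which side receives the copied list, so you cannot pick the side after seeing $T$. Choose the roles of $X$ and $Y$ so that $m_X+r\le m_Y+s$. Since $T_V\ne\varnothing$ and $L(T_V)\subseteq R(T)\cup\FY$, and using $|\FY|\le|\NY|\le r$, you get
\[
m_Y+|T_V|\le|L(T_V)|\le|R(T)|+|\FY|\le|T_U|+|T_V|+\mu-1+r ,
\]
that is, $|T_U|\ge m_Y+s-k+1$. On the other hand, $|L(S)|\ge k$ gives $|S|\ge k-m_X$, so
\[
|U\setminus S|\le m_X+r-k\le m_Y+s-k .
\]
Hence $T_U\not\subseteq U\setminus S$, so $I=T\cap S\ne\varnothing$. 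Your minimum-surplus step then applies: $|L(S)\setminus L(I)|\le|S\setminus I|$, and removing the common set $\FX$ from all lists on $X$ preserves this. That gives $|R(T\cup S)|\le|R(T)|+|S\setminus T|\le|T\cup S|+\mu-1$, and your final clique-counting contradiction goes through.
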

It suffices to colour from lists of \emph{exactly} $k$ colours, since a larger list may be
replaced by any $k$ of its colours. For $k=1$ a graph with clique number at most $1$ has no edges
and is trivially $1$-choosable. So fix $k\ge2$ and suppose, for a contradiction, that some
elementary graph with clique number at most $k$ is not $k$-choosable. \emph{For the rest of the
paper}:
\begin{itemize}
    \item
$G$ is such a graph with the fewest vertices and, subject to that, the most edges.
\item $L$ is an assignment of exactly $k$ colours to each vertex of $G$ from which $G$ has no
$L$-colouring.
\end{itemize}
\begin{lemma}\label{lem:mincex}
\begin{enumerate}[label=\textup{(\roman*)}]
\item Every proper induced subgraph of $G$ has a $P$-colouring for \emph{every} list assignment
      $P$ with all lists of size $k$.
\item If $G^+$ is obtained from $G$ by adding one or more edges and $G^+$ is elementary, then
      $\omega(G^+)\ge k+1$.
\item $G$ is connected.
\end{enumerate}
\end{lemma}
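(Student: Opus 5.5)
Each of the three parts follows from the choice of $G$ as a counterexample with the fewest vertices and, subject to that, the most edges; we take them in turn.

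For (i), let $G''$ be a proper induced subgraph of $G$. Deleting vertices preserves elementarity: the restriction of a red/blue edge colouring of $G$ to $G''$ still gives every induced $P_3$ of $G''$ one edge of each colour, because an induced path of $G''$ is an induced path of $G$. Moreover $\omega(G'')\le\omega(G)\le k$. Since $G''$ has fewer vertices than $G$, it is not a counterexample, so it is $k$-choosable. It therefore has a $P$-colouring for every assignment $P$ of $k$-lists; in particular this holds for the restriction of $L$.

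For (ii), let $G^+$ be elementary with the same vertex set as $G$ and strictly more edges. Suppose $\omega(G^+)\le k$. Then $G^+$ has the same number of vertices as $G$ and more edges, so by the edge-maximal choice of $G$ it is not a counterexample, and hence it is $k$-choosable. In particular $G^+$ has an $L$-colouring $f$. Since $E(G)\subseteq E(G^+)$, $f$ is also a proper colouring of $G$ and respects $L$. This contradicts the choice of $L$, so $\omega(G^+)\ge k+1$.

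For (iii), suppose $G$ is disconnected. Each component $C$ is a proper induced subgraph of $G$, so by (i) it has an $L|_C$-colouring. There are no edges between different components, so these colourings together form an $L$-colouring of $G$, a contradiction. (We could instead connect components by edges in order to use (ii), but (i) alone suffices.)

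None of these steps is a real obstacle. The only point that needs care is that (i) relies on elementarity being hereditary for induced subgraphs. This is immediate from the definition: an induced $P_3$ in an induced subgraph is an induced $P_3$ in $G$, so it already carries one red and one blue edge under the restricted colouring.
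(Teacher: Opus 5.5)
Your proof is correct and takes the same approach as the paper. The paper dismisses the lemma as immediate from the extremal choice of $G$ (fewest vertices, then most edges). You have written out exactly that argument, including the one point that actually needs saying: elementarity is inherited by induced subgraphs, which is what lets minimality apply in (i).
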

 \begin{proof}
Elementary my dear Watson!  (The  lemma is immediate from the choice of $G$.)
\end{proof}
The word \emph{every} in (i) is what allows us, later, to delete a few vertices and then change
some of the remaining lists, as long as every list keeps $k$ colours.
By Theorem~\ref{thm:MR} and Lemma~\ref{lem:anatomy}, we may fix a representation of $G$ as an
augmentation of $\LH$, $H$ a bipartite multigraph, in which every augmented flat edge has the form
$\eX\eY$ with $\eX=\pX c$, $\eY=c\pY$ and $\deg_H c=2$. An augment $J=(X,Y)$ is \emph{complete} if
$X$ is complete to $Y$ (so that $J$ is a clique) and \emph{incomplete} otherwise. The next lemma
shows that the extremal choice of $G$ forces every incomplete augment to have  a very rigid structure. See Figure \ref{fig:anatomy} for an illustration.
\begin{lemma}[Saturation]\label{lem:saturation}
Let $J=(X,Y)$ be an incomplete augment in the representation of $G$. Then:
\begin{enumerate}[label=\textup{(\alph*)}]
\item the non-adjacent pairs between $X$ and $Y$ form a matching $M$ with $\mu=|M|\ge1$; thus
      $J=\Kminus{X}{Y}{M}$, and we use the notation $X_0,Y_0,U,V,r,s$ of Definition~\ref{def:KXYM}.
\item $\omega(J)=|X|+|Y|-\mu=k$, i.e.\ $k=\mu+r+s$.
\item $\NX\ne\varnothing$ and $\NY\ne\varnothing$.
\item $r\ge1$, $s\ge1$, $|\NX|\le s$ and $|\NY|\le r$.
\end{enumerate}
\end{lemma}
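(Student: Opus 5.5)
The plan is to exploit the two extremal properties of $G$ separately. Edge-maximality, via Lemma~\ref{lem:mincex}(ii), will give the rigid shape in (a) and (b). Vertex-minimality, via Lemma~\ref{lem:mincex}(i) and the Deficiency Lemma, will give (c) and (d).

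The basic observation is the following. Replace the set of $X$--$Y$ edges of $J$ by a larger set, obtaining $G^+$. Then $G^+$ is again an augmentation of $\LH$: the new augment is still cobipartite with at least one $X$--$Y$ edge. Hence $G^+$ is elementary by the ``if'' direction of Theorem~\ref{thm:MR}. By the last sentence of Lemma~\ref{lem:anatomy}, part (c) of that lemma still holds in $G^+$, so every clique of $G^+$ meeting both $X$ and $Y$ lies in $V(J)$. Therefore
\[\omega(G^+)=\max\{\omega(G),\omega(J^+)\}\le\max\{k,\omega(J^+)\},\]
and Lemma~\ref{lem:mincex}(ii) forces $\omega(J^+)\ge k+1$ for every such proper enlargement $J^+$.

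For (a), let $F$ be the bipartite graph of non-adjacent $X$--$Y$ pairs. It is nonempty because $J$ is incomplete. By Lemma~\ref{lem:konig}, $\omega(J)=|X|+|Y|-\nu(F)$. Suppose some edge $e$ of $F$ is missed by some maximum matching of $F$. Adding $e$ to $G$ deletes $e$ from $F$ without changing $\nu$, so $\omega(J^+)=\omega(J)\le k$, a contradiction. Hence every edge of $F$ lies in every maximum matching. Two edges of $F$ sharing a vertex cannot lie in a common matching, so $F$ is a matching $M$ with $\mu\ge1$, and $J=\Kminus{X}{Y}{M}$.

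For (b), we have $\omega(J)=\mu+r+s\le k$ by \eqref{eq:omegaKXYM}. If this inequality were strict, adding one pair of $M$ as an edge would give $\omega(J^+)\le\omega(J)+1\le k$. Note that $J^+$ still has an $X$--$Y$ edge. This contradicts the basic observation, so $k=\mu+r+s$.

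For (c) and (d), colour $G-V(J)$ by Lemma~\ref{lem:mincex}(i); this step is vacuous if $G=J$. Call the colouring $f$, and define residual lists $R$ on $J$ by deleting $f(\NX)$ from the lists on $X$ and $f(\NY)$ from the lists on $Y$. By Lemma~\ref{lem:anatomy}(a), any $R$-colouring of $J$ combines with $f$ to give an $L$-colouring of $G$, so $J$ has no $R$-colouring. By Lemma~\ref{lem:anatomy}(b), $X\cup\NX$ is a clique, so $|\NX|\le k-|X|$ and hence $|R(x)|\ge|X|$; likewise $|R(y)|\ge|Y|$. So the Deficiency Lemma applies.
\begin{itemize}
\item Corollary~\ref{cor:deficiency}(a) then forbids $U=\varnothing$ and $V=\varnothing$, so $r,s\ge1$.
\item If $\NX=\varnothing$, every vertex of $U$ keeps all $k=\omega(J)$ colours, and Corollary~\ref{cor:deficiency}(b) gives an $R$-colouring, a contradiction. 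Symmetrically $\NY\ne\varnothing$.
\item The same clique bound gives $|\NX|\le k-|X|=k-\mu-r=s$ and similarly $|\NY|\le r$.
\end{itemize}

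The main obstacle I expect is the justification in the basic observation. One has to check that enlarging the $X$--$Y$ edge set really yields a graph which is elementary, and in which no new large clique appears outside $J$. This is exactly where the final clause of Lemma~\ref{lem:anatomy} and both directions of Theorem~\ref{thm:MR} are needed. The remaining steps are short matching and Hall-type computations.
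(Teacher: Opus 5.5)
Your proposal is correct and follows essentially the paper's route: edge-maximality via Lemma~\ref{lem:mincex}(ii), Lemma~\ref{lem:konig} and Lemma~\ref{lem:anatomy}(c) gives (a) and (b), and a colouring of $G-V(J)$ with residual lists plus Corollary~\ref{cor:deficiency} gives (c) and (d). The differences are cosmetic: in (a) you add one edge of $F$ that some maximum matching avoids, instead of all of $E(F)\setminus M$ at once, and in (d) you get $r,s\ge1$ directly from Corollary~\ref{cor:deficiency}(a) rather than from $\NX,\NY\ne\varnothing$ and the clique bound.
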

\begin{proof}
(a) Let $F$ be the bipartite graph with parts $X,Y$ whose edges are the non-adjacent $X$--$Y$
pairs of $J$. $F$ has at least one edge because $J$ is incomplete. Let $M$ be a maximum matching of
$F$ and $\mu=|M|\ge1$. Let $G^+$ be obtained from $G$ by adding, as edges, all pairs in
$E(F)\setminus M$. The augment $J^+=(X,Y)$ of $G^+$ is also cobipartite with at least one edge between
$X$ and $Y$. Hence $G^+$ is an augmentation of $\LH$ along the same flat edges and is elementary by
Theorem~\ref{thm:MR}. Note that its clique number is at most $k$. This is because a clique of $G^+$ that is not a clique
of $G$ contains an added pair. Thus, it meets both $X$ and $Y$, and therefore lies inside $J^+$ by
Lemma~\ref{lem:anatomy}(c). 
Moreover, $\omega(J^+)=|X|+|Y|-\mu=\omega(J)\le k$ by Lemma~\ref{lem:konig}, applied to $J^+$ (whose
non-adjacent $X$--$Y$ pairs are exactly the $\mu$ pairs of $M$) and to $J$ (where the maximum
size of a matching in $F$ is $\mu$).  Lemma~\ref{lem:mincex}(ii), then implies that no edge was added to $G$. Thus, $E(F)=M$.
(b)
As above, have $\omega(J)=|X|+|Y|-\mu\le k$.
If the inequality were strict, add a single pair of $M$ as an edge. The non-adjacent $X$--$Y$
pairs then form the matching $M$ minus one pair, so the clique number of the augment rises by
exactly one and stays at most $k$. As in the case (a), 
the new graph is elementary with clique number at
most $k$, contradicting Lemma~\ref{lem:mincex}(ii). Hence $|X|+|Y|-\mu=k$, which is
$k=\mu+r+s$ by Definition~\ref{def:KXYM}.
(c) Suppose $\NX=\varnothing$. By Lemma~\ref{lem:mincex}(i), the graph $G-V(J)$ has an
$L$-colouring $f$ (if $G=J$, take $f$ empty). Let $\FY=f(\NY)$ and define lists on $J$ by
$R(x)=L(x)$ for $x\in X$ and $R(y)=L(y)\setminus\FY$ for $y\in Y$. Since $Y\cup\NY$ is a clique
(Lemma~\ref{lem:anatomy}(b)), $|Y|+|\NY|\le k$, so $|R(y)|\ge k-|\NY|\ge|Y|$; and $|R(x)|=k\ge|X|$.
This implies $J$ admits an $R$-colouring $g$ by Corollary~\ref{cor:deficiency}.
Then $f\cup g$ is an $L$-colouring of $G$.
This contradicts the
choice of $L$. Hence $\NX\ne\varnothing$, and $\NY\ne\varnothing$ by symmetry.
(d) By (b) and (c), $X\cup\NX$ is a clique with $|X|+|\NX|\le k$ and $\NX\ne\varnothing$, so
$|X|\le k-1$ and therefore $s=k-|X|\ge1$ and $|\NX|\le k-|X|=s$. Symmetrically $r\ge1$ and
$|\NY|\le r$.
\end{proof}
\begin{figure}[htbp]
\centering
\begin{tikzpicture}[x=1cm,y=1cm]
 
 \node[vtx] (x1) at (0,1.5) {}; \node[vtx] (x2) at (0,0.9) {};
 \node[vtx] (u1) at (0,-0.35) {}; \node[vtx] (u2) at (0,-0.95) {};
 
 \node[vtx] (y1) at (3.2,1.5) {}; \node[vtx] (y2) at (3.2,0.9) {};
 \node[vtx] (v1) at (3.2,-0.15) {}; \node[vtx] (v2) at (3.2,-0.75) {}; \node[vtx] (v3) at (3.2,-1.35) {};
 
 \node[vtx] (n1) at (-2.9,0.6) {}; \node[vtx] (n2) at (-2.9,0.0) {};
 \node[vtx] (m1) at (6.1,0.3) {};
 \begin{scope}[on background layer]
  \node[clique,fill=blue!8,inner xsep=10pt,inner ysep=8pt,fit=(x1)(u2),label={[lbl]above:$X$}] (X) {};
  \node[clique,fill=blue!8,inner xsep=10pt,inner ysep=8pt,fit=(y1)(v3),label={[lbl]above:$Y$}] (Y) {};
  \node[clique,inner sep=7pt,fit=(n1)(n2),label={[lbl]above:$\NX$}] (NX) {};
  \node[clique,inner sep=7pt,fit=(m1),label={[lbl]above:$\NY$}] (NY) {};
 \end{scope}
 
 \draw[gray!60,densely dotted] ($(X.west)+(0,0.27)$) -- ($(X.east)+(0,0.27)$);
 \draw[gray!60,densely dotted] ($(Y.west)+(0,0.37)$) -- ($(Y.east)+(0,0.37)$);
 
 \draw[complete] (NX.east) -- (X.west);
 \draw[complete] (Y.east) -- (NY.west);
 \draw[complete] ($(X.east)+(0,-0.6)$) -- ($(Y.west)+(0,-0.6)$);
 \node[slbl,align=center] at (1.6,-1.05) {all other pairs\\are adjacent};
 
 \draw[missing] (x1)--(y1); \draw[missing] (x2)--(y2);
 \node[slbl,red!75!black] at (1.6,2.5) {$M$: the only missing pairs};
 
 \draw[decorate,decoration={brace,amplitude=4pt}] (-0.75,0.7) -- (-0.75,1.7)
    node[midway,left=5pt,lbl]{$X_0$, $\mu$};
 \draw[decorate,decoration={brace,amplitude=4pt}] (-0.75,-1.15) -- (-0.75,-0.15)
    node[midway,left=5pt,lbl]{$U$, $r$};
 \draw[decorate,decoration={brace,amplitude=4pt,mirror}] (3.95,0.7) -- (3.95,1.7)
    node[midway,right=5pt,lbl]{$Y_0$, $\mu$};
 \draw[decorate,decoration={brace,amplitude=4pt,mirror}] (3.95,-1.55) -- (3.95,0.05)
    node[midway,right=5pt,lbl]{$V$, $s$};
 
 \node[lbl] at (-2.9,-0.75) {$|\NX|\le s$};
 \node[lbl] at (6.1,-0.45) {$|\NY|\le r$};
 \node[lbl] at (1.6,-2.15) {$|X|=\mu+r,\qquad |Y|=\mu+s,\qquad \mu+r+s=k$};
\end{tikzpicture}
\caption{An incomplete augment $J=(X,Y)$ of the minimal counterexample after
Lemma~\ref{lem:saturation}. Shaded regions are cliques; a thick grey band between two regions
means that they are complete to each other. The dashed pairs form the matching $M$ and are the
only non-adjacent pairs inside $X\cup Y$. There are no edges between $\NX$ and $Y$, nor between
$\NY$ and $X$ (edges between $\NX$ and $\NY$ may or may not exist).}
\label{fig:anatomy}
\end{figure}
\begin{proposition}\label{prop:noincomplete}
Every augment in the representation of $G$ is complete.
\end{proposition}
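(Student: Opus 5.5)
We argue by contradiction, following the outline of \S\ref{sec:overview}. Suppose $J=(X,Y)$ is an incomplete augment; by Lemma~\ref{lem:saturation} we have $J=\Kminus{X}{Y}{M}$ with $\mu\ge1$, $r,s\ge1$, $k=\mu+r+s$, $|\NX|\le s$ and $|\NY|\le r$. Among all nonempty subsets of $U$, choose $S\subseteq U$ of minimum surplus $m_X=\sur(S)$ and fix $z\in S$. Let $G'=G-Y_0$. It is an induced subgraph of $G$, so it is elementary with clique number at most $k$. Define $L'(x)=L(z)$ for $x\in X_0$ and $L'=L$ elsewhere. All lists of $L'$ have exactly $k$ colours, so Lemma~\ref{lem:mincex}(i) gives an $L'$-colouring $f$ of $G'$. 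We keep $f$ on $G-V(J)$, put $\FX=f(\NX)$ and $\FY=f(\NY)$, and define the residual lists $R(x)=L(x)\setminus\FX$ on $X$ and $R(y)=L(y)\setminus\FY$ on $Y$.

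By Lemma~\ref{lem:anatomy}(a), every vertex of $X$ sees exactly $\NX$ outside $J$, and every vertex of $Y$ sees exactly $\NY$. So any $R$-colouring of $J$ combined with $f|_{G-V(J)}$ is an $L$-colouring of $G$. Moreover $|R(x)|\ge k-|\NX|\ge k-s=|X|$ and $|R(y)|\ge k-r=|Y|$. Since $G$ has no $L$-colouring, $J$ has no $R$-colouring, and Lemma~\ref{lem:deficiency} yields a set $T\subseteq U\cup V$ meeting both $U$ and $V$ with $|R(T)|\le|T|+\mu-1$.

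The key step is to enlarge $T$ to $T'=T\cup S$ while keeping $|R(T')|\le|T'|+\mu-1$. Write $T_U=T\cap U$ and $T_V=T\cap V$. Since $T_U\cup S$ and $T_U\cap S$ (if nonempty) are subsets of $U$, we have $\sur(T_U\cap S)\ge m_X$. By submodularity of $S'\mapsto|L(S')|$,
\[
|L(T_U\cup S)|\le|L(T_U)|+|L(S)|-|L(T_U\cap S)|\le|L(T_U)|+|S|-|T_U\cap S|,
\]
which says that adding $S\setminus T_U$ adds at most $|S\setminus T_U|$ new $L$-colours relative to $L(T_U)$. If $T_U\cap S=\varnothing$, the same conclusion follows from $|L(T_U)|\ge$ \dots; here one needs $\sur(T_U)\ge m_X$ instead. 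All vertices of $U$ lose the same set $\FX$, so this transfers to $R$ on the $U$-part. One must check that the $V$-part of $T$ does not interfere. I would handle this by working with the identity $R(T)=R(T_U)\cup R(T_V)$ and using submodularity of $A\mapsto|R(A\cup T_V)|$ over subsets $A\subseteq U$. Note that $R(A)=L(A)\setminus\FX$, and surplus minimality for $L$ must be converted to a statement about $R$, because removing $\FX$ can shrink the sets unevenly. \textbf{This conversion is the main obstacle.} The fallback is to choose $S$ minimizing surplus with respect to $L\setminus\FX$; but $\FX$ depends on $f$, which depends on $z$, so the choice would be circular. I would therefore try the inequality $|R(A\cup B)|+|R(A\cap B)|\le|R(A)|+|R(B)|$ directly, together with $|R(S\cap T_U)|\ge|L(S\cap T_U)|-|\FX|$. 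If that does not close, I would strengthen the choice of $S$, for instance minimal by inclusion among minimum-surplus sets.

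Given $T'\ni z$ with $|R(T')|\le|T'|+\mu-1$, we derive a contradiction in $G'$. The set $X_0\cup U\cup V$ is a clique of $G'$: $X$ is a clique, and every pair of $M$ has its $Y$-end deleted, so $X_0$ is complete to $V$. Hence $f$ is injective on it. Each $x\in X_0$ has $f(x)\in L(z)\setminus\FX=R(z)\subseteq R(T')$, since $x$ is adjacent to all of $\NX$. Each $w\in T'$ has $f(w)\in R(w)$, because $T'\cap U$ avoids $\FX$ and $T'\cap V$ avoids $\FY$. So $f$ places $\mu+|T'|$ distinct colours inside $R(T')$, which has at most $|T'|+\mu-1$ colours. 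This is a contradiction, so no incomplete augment exists.
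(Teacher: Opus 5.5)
There is a genuine gap, and it is not the one you flag. Your enlargement of $T$ to $T'=T\cup S$ is justified only when $I=T\cap S\neq\varnothing$. Your remark for the case $T\cap S=\varnothing$ is false: $\sigma(T_U)\ge m_X$ is a \emph{lower} bound on $|L(T_U)|$ and gives no control over the colours that $S$ adds. Since $|S|\le r$ and $|L(S)|\ge|L(z)|=k$, you always have $m_X\ge k-r=\mu+s\ge 2$. So if $R(S)$ happens to be disjoint from $R(T)$, then $S$ contributes $|R(S)|\ge|L(S)|-|F_X|\ge|S|+m_X-s\ge|S|+\mu$ new colours, whereas keeping $|R(T')|\le|T'|+\mu-1$ allows only $|S|$. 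Refining the choice of $S$ (e.g.\ inclusion-minimal) cannot help, because $S$ and $z$ are fixed before $f$, hence before $T$.

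The missing idea is to \emph{prove} that $T\cap S\neq\varnothing$, and this requires choosing the side on which $S$ lives. Let $m_Y$ be the minimum surplus over nonempty subsets of $V$. The $X\leftrightarrow Y$ symmetry of Lemma~\ref{lem:saturation} lets you assume $m_X+r\le m_Y+s$ before picking $S\subseteq U$. Then $T_V\ne\varnothing$, $L(T_V)\subseteq R(T)\cup F_Y$ and $|F_Y|\le r$ give $m_Y+|T_V|\le|T|+\mu-1+r$, i.e.\ $|T_U|\ge m_Y+s-k+1$. On the other hand, $T\cap S=\varnothing$ would give $|T_U|\le r-|S|\le r-(k-m_X)\le m_Y+s-k$, using $|S|\ge k-m_X$ (from $|L(S)|\ge k$). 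Your proposal fixes $S\subseteq U$ with no such comparison, so this step is unavailable to it as written.

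The ``conversion'' you call the main obstacle is in fact immediate once $I\ne\varnothing$. All of $S$ lies in $X$ and loses the same set $F_X$, so $R(S)\setminus R(I)=(L(S)\setminus L(I))\setminus F_X$ has at most $|S\setminus I|$ elements by the minimality of $\sigma(S)$. Moreover $R(T')\setminus R(T)\subseteq R(S)\setminus R(I)$ because $R(I)\subseteq R(T)$, so the $V$-part of $T$ never enters. The rest of your proposal agrees with the paper: the reduced graph $G-Y_0$ with $L(z)$ copied onto $X_0$, the residual lists, the extension of an $R$-colouring, and the final count on the clique $X_0\cup U\cup V$.
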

\begin{proof}
Suppose that $J=(X,Y)$ is an incomplete augment. Throughout the proof we use the notation and
the conclusions of Lemma~\ref{lem:saturation}: $J=\Kminus{X}{Y}{M}$ with $|M|=\mu\ge1$,
$X=X_0\cup U$ and $Y=Y_0\cup V$ with $|U|=r\ge1$ and $|V|=s\ge1$, $k=\mu+r+s$, and the exterior
neighbourhoods $\NX,\NY$ with $|\NX|\le s$ and $|\NY|\le r$. Recall from Lemma~\ref{lem:anatomy}
that $\NX$ and $\NY$ are disjoint, that $X\cup\NX$ and $Y\cup\NY$ are cliques, and that the pairs
of $M$ are the only non-adjacent pairs inside $X\cup Y$.
Our proof proceeds in three phases. In the first phase, we fix a side of $J$ and choose a specific subset with least surplus of colours. We choose a specific vertex and and copy its list to 
vertices in $X_0$ and try to colour $G-Y_0$ and then extend this to whole of $J$.  
\stepname{Fix a side and a set $S$.}
For a nonempty set $S$ of vertices write $\sur(S)=|L(S)|-|S|$ for its surplus with respect to the
original lists $L$, and let $m_X=\min\{\sur(S):\ \varnothing\ne S\subseteq U\}$, and 
$m_Y=\min\{\sur(S):\ \varnothing\ne S\subseteq V\}$. Due to symmetry, we may assume without loss of generality that,
\begin{equation}\label{eq:side}
m_X+r\ \le\ m_Y+s .
\end{equation}
Fix a nonempty $S\subseteq U$ with $\sur(S)=m_X$, and fix a vertex $z\in S$. We note two
consequences of this choice. First, since $|L(S)|\ge|L(z)|=k$,
\begin{equation}\label{eq:Sbig}
|S|\ \ge\ k-m_X .
\end{equation}
Second, for every nonempty $I\subseteq S$ we have $\sur(I)\ge m_X=\sur(S)$ and $L(I)\subseteq L(S)$,
so $|L(S)|-|L(I)|\le|S|-|I|$, that is,
\begin{equation}\label{eq:Smin}
|L(S)\setminus L(I)|\ \le\ |S\setminus I|\qquad\text{for every nonempty }I\subseteq S .
\end{equation}
In words, \eqref{eq:Smin} says that the vertices of $S\setminus I$ contribute at most one new
colour each beyond $L(I)$.
\stepname{Colouring $G-Y_0$ with the list of $z$ copied onto $X_0$.}
Let $G'=G-Y_0$, a proper induced subgraph of $G$ because $\mu\ge1$, and give it the lists
\[
L'(v)=\begin{cases} L(z), & v\in X_0,\\ L(v), & v\notin X_0.\end{cases}
\]
Every list $L'(v)$ has $k$ colours, so Lemma~\ref{lem:mincex}(i) provides an $L'$-colouring $f$ of
$G'$. The sets $\NX$ and $\NY$ lie outside $J$, and hence in $G'$. Set
\[
\FX=f(\NX),\qquad \FY=f(\NY),\qquad\text{then } |\FX|\le s,\ |\FY|\le r .
\]
Now define the \emph{residual lists} on $V(J)$, using the \emph{original} lists,
\begin{equation}\label{eq:residual}
R(x)=L(x)\setminus\FX\ \ (x\in X),\qquad R(y)=L(y)\setminus\FY\ \ (y\in Y).
\end{equation}
Then
\begin{equation}\label{eq:Rsizes}
|R(x)|\ \ge\ k-s\ =\ |X|\quad(x\in X),\qquad |R(y)|\ \ge\ k-r\ =\ |Y|\quad(y\in Y).
\end{equation}
The following two observations will be useful (see Figure~\ref{fig:reduced}):
\begin{enumerate}[label=\textup{(B\arabic*)},leftmargin=3em]
\item\label{B1} \emph{If $J$ has an $R$-colouring $g$, then $G$ has an $L$-colouring.}\\
Define $c=g$ on $V(J)$ and $c=f$ on $V(G)\setminus V(J)$. Lists are respected, because
$R\subseteq L$ on $J$ and $L'=L$ off $X_0\subseteq V(J)$. Edges inside $J$ are properly coloured
by $g$, edges outside $J$ by $f$. An edge from $x\in X$ to a vertex $w\notin V(J)$ has
$w\in\NX$, so $f(w)\in\FX$ while $g(x)\notin\FX$; similarly for $Y$ and $\FY$.
\item\label{B2} \emph{$Q=X_0\cup U\cup V$ is a clique of $G'$ with $|Q|=\mu+r+s=k$; moreover
$f(x)\in R(z)$ for every $x\in X_0$, and $f(w)\in R(w)$ for every $w\in U\cup V$.}\\ 
Indeed $Q\subseteq V(G')$, and every non-adjacent pair inside $X\cup Y$ is a pair of $M$, which
has a vertex in the deleted set $Y_0$; so $Q$ is a clique. Let $x\in X_0$. Then $f(x)\in L'(x)=L(z)$,
and $x$ is adjacent in $G'$ to all of $\NX$ (Lemma~\ref{lem:anatomy}(a)), so $f(x)\notin\FX$; hence
$f(x)\in L(z)\setminus\FX=R(z)$. A vertex $w\in U$ has $f(w)\in L(w)$ and is adjacent to all of $\NX$, so
$f(w)\in R(w)$; a vertex $w\in V$ is adjacent to all of $\NY$, so likewise $f(w)\in R(w)$.
\end{enumerate}
\begin{figure}[htbp]
\centering
\begin{tikzpicture}[x=1cm,y=1cm]
 
 \node[vtx] (x1) at (0,1.5) {}; \node[vtx] (x2) at (0,0.9) {};
 \node[vtxz,label={[slbl]right:$z$}] (z) at (0,-0.2) {};
 \node[vtxT] (u2) at (0,-0.8) {};
 \node[vtxT] (u3) at (0,-1.5) {};
 
 \node[vtxT] (v1) at (3.4,-0.3) {}; \node[vtx] (v2) at (3.4,-0.9) {}; \node[vtxT] (v3) at (3.4,-1.5) {};
 
 \node[vtxdel] (y1) at (3.4,3.5) {}; \node[vtxdel] (y2) at (3.4,2.9) {};
 
 \node[vtx] (n1) at (-3.4,0.3) {}; \node[vtx] (n2) at (-3.4,-0.3) {};
 \node[vtx] (m1) at (7.0,-0.9) {};
 \begin{scope}[on background layer]
  \node[clique,fill=blue!8,inner xsep=10pt,inner ysep=8pt,fit=(x1)(u3),label={[lbl]above:$X$}] (X) {};
  \node[clique,fill=blue!8,inner xsep=10pt,inner ysep=8pt,fit=(v1)(v3),label={[lbl]above:$V$}] (V) {};
  \node[draw=gray!60,dashed,rounded corners=7pt,inner xsep=10pt,inner ysep=8pt,fit=(y1)(y2),
        label={[lbl,text=gray]right:$Y_0$ (deleted)}] (Y0) {};
  \node[clique,inner sep=7pt,fit=(n1)(n2),label={[lbl]above:$\NX$}] (NX) {};
  \node[clique,inner sep=7pt,fit=(m1),label={[lbl]above:$\NY$}] (NY) {};
  \node[draw=blue!60!black,dashed,thick,rounded corners=9pt,inner xsep=8pt,inner ysep=13pt,fit=(X)(V)] (Q) {};
 \end{scope}
 \draw[gray!60,densely dotted] ($(X.west)+(0,0.35)$) -- ($(X.east)+(0,0.35)$);
 \node[draw,densely dotted,rounded corners=5pt,inner sep=4pt,fit=(z)(u2),label={[slbl]left:$S$}] {};
 
 \draw[complete] (NX.east) -- (X.west);
 \draw[complete] (V.east) -- (NY.west);
 \draw[complete] (X.east |- V.west) -- (V.west);
 
 \draw[missing,opacity=0.35] (x1)--(y1); \draw[missing,opacity=0.35] (x2)--(y2);
 
 \node[slbl,align=center] at (-1.95,-0.55) {$\FX=f(\NX)$\\$|\FX|\le s$};
 \node[slbl,align=center] at (5.35,-0.45) {$\FY=f(\NY)$\\$|\FY|\le r$};
 \node[slbl,align=right,anchor=east] at (-0.75,1.35) {$X_0$: lists $L(z)$,\\ $f$-colours in $R(z)$};
 \node[slbl,align=center] at (1.7,-1.3) {complete};
 \node[lbl,blue!60!black,anchor=north] at (Q.south) {$Q=X_0\cup U\cup V$: a clique of $G'$ on $\mu+r+s=k$ vertices};
 \node[slbl,anchor=north] at ($(Q.south)+(0,-0.55)$) {orange: the set $T$ of third phase\quad ($T_U=T\cap U$, $T_V=T\cap V$);\quad $T'=T\cup S$};
\end{tikzpicture}
\caption{The reduced instance of second phase. The matched vertices $Y_0$ are deleted; the matched
vertices $X_0$ receive the list of $z\in S\subseteq U$; the free vertices keep their lists. The
colouring $f$ of $G'$ places colours of $\FX$ on $\NX$ and of $\FY$ on $\NY$, and it is injective on
the $k$-clique $Q$. In third phase the Deficiency Lemma yields the set $T$ (orange); the choice of $S$
guarantees $T\cap S\ne\varnothing$, so $T'=T\cup S$ is still deficient, yet $f$ colours the
$\mu+|T'|$ vertices of $X_0\cup T'$ from $R(T')$.}
\label{fig:reduced}
\end{figure}
\stepname{Extend the colouring $f$ to $J$.}
We claim that $J$ has an $R$-colouring. Suppose not. By~\eqref{eq:Rsizes} the Deficiency
Lemma~\ref{lem:deficiency} applies to $J=\Kminus{X}{Y}{M}$ with the lists $R$. This gives a set
$T\subseteq U\cup V$ with
\[
T_U:=T\cap U\ne\varnothing,\qquad T_V:=T\cap V\ne\varnothing,\qquad |R(T)|\le|T|+\mu-1 .
\]
The following holds:
\begin{enumerate}[label=\textup{(C\arabic*)},leftmargin=3em]
\item\label{C1} \emph{$T$ has nonempty intersection with $S$.}\\ 
For $y\in T_V$ we have $L(y)\subseteq R(y)\cup\FY$, hence $L(T_V)\subseteq R(T)\cup\FY$. Using the
definition of $m_Y$ (note $T_V\ne\varnothing$), followed by $|\FY|\le r$ and the bound on $|R(T)|$ we have,
\[
m_Y+|T_V|\ \le\ |L(T_V)|\ \le\ |R(T)|+|\FY|\ \le\ |T_U|+|T_V|+\mu-1+r .
\]
Since $\mu+r=k-s$, this gives
\begin{equation}\label{eq:TUlower}
|T_U|\ \ge\ m_Y+s-k+1 .
\end{equation}
If $T\cap S=\varnothing$, then $T_U\subseteq U\setminus S$, and by \eqref{eq:Sbig} and \eqref{eq:side}
\[
|T_U|\ \le\ r-|S|\ \le\ m_X+r-k\ \le\ m_Y+s-k ,
\]
contradicting \eqref{eq:TUlower}. Hence $T\cap S\ne\varnothing$.
\item\label{C2} \emph{The set $T'=T\cup S$ satisfies $|R(T')|\le|T'|+\mu-1$.}\\ 
Put $I=T\cap S$. This is nonempty by \ref{C1}. All vertices of $S$ lie in $X$, so by
\eqref{eq:residual} the same set $\FX$ is removed from each of their lists, and therefore
\[
R(S)\setminus R(I)=\bigl(L(S)\setminus L(I)\bigr)\setminus\FX .
\qquad\text{Hence}\qquad
|R(S)\setminus R(I)|\le|S\setminus I|=|S\setminus T|
\]
by \eqref{eq:Smin}. Since $R(I)\subseteq R(T)$, the new colours contributed by $S$ satisfy
$R(T')\setminus R(T)=R(S)\setminus R(T)\subseteq R(S)\setminus R(I)$. Hence
\[
|R(T')|\ \le\ |R(T)|+|S\setminus T|\ \le\ |T|+\mu-1+|S\setminus T|\ =\ |T'|+\mu-1 .
\]
\end{enumerate}
Note that $X_0$ and $T'$ are disjoint subsets of the clique $Q$ of \ref{B2}, so $f$ takes $\mu+|T'|$
pairwise distinct values on $X_0\cup T'$. By \ref{B2} all of them lie in $R(z)\cup R(T')=R(T')$,
because $z\in S\subseteq T'$. Hence $|R(T')|\ge\mu+|T'|$, contradicting \ref{C2}.
This means that  $J$ must admit an $R$-colouring, and by observation \ref{B1} the graph $G$ has an $L$-colouring. This contradicts the choice of $(G,L)$ and proves the proposition.
\end{proof}
The remaining case is that  every augment of $G$ is a clique. We claim that such an augment can be absorbed into the multigraph by multiplying edges (Figure~\ref{fig:parallel}). If that is the case, Galvin's theorem that  line graph of bipartite multigraphs are chromatic choosable can be applied and we are done. Our next lemma establishes this claim.
\begin{figure}[h]
\centering
\begin{tikzpicture}[scale=0.85,
  x=1cm,y=1cm,font=\small,
  root/.style={
    circle,draw,fill=white,inner sep=2.2pt
  },
  outside/.style={
    circle,draw=black!55,fill=black!10,
    inner sep=0pt,minimum size=5pt
  },
  vx/.style={
    circle,draw=Xcolour,fill=Xcolour!12,
    inner sep=1.8pt,font=\scriptsize
  },
  vy/.style={
    circle,draw=Ycolour,fill=Ycolour!12,
    inner sep=1.8pt,font=\scriptsize
  },
  unchanged/.style={draw=black!45,line width=.55pt},
  ex/.style={draw=Xcolour,line width=1pt},
  ey/.style={draw=Ycolour,line width=1pt},
  inside/.style={draw=black!55,line width=.65pt},
  map/.style={-{Stealth[length=2mm]},line width=.75pt},
  title/.style={font=\normalsize},
  group/.style={
    ellipse,draw=black!35,fill=black!3,
    minimum width=.85cm,minimum height=1.45cm
  }
]
\begin{scope}
  \node[title] at (0,1.65) {$H$};
  \node[root,label=below:$p_X$] (pX) at (-2,0) {};
  \node[root,label=below:$c$]   (c)  at (0,0) {};
  \node[root,label=below:$p_Y$] (pY) at (2,0) {};
  \draw[ex] (pX)--node[above] {$e_X$}(c);
  \draw[ey] (c)--node[above] {$e_Y$}(pY);
  \foreach \s in {-1,1}{
    \node[outside] (l\s) at (-2.8,\s*.65) {};
    \node[outside] (r\s) at (2.8,\s*.65) {};
    \draw[unchanged] (l\s)--(pX) (pY)--(r\s);
  }
  \node[font=\scriptsize] at (0,-.8) {$d_H(c)=2$};
\end{scope}
\begin{scope}[shift={(7.8,0)}]
  \node[title] at (0,1.65) {$H'$};
  \node[root,label=below:$p_X$] (qX) at (-2,0) {};
  \node[root,label=below:$c$]   (d)  at (0,0) {};
  \node[root,label=below:$p_Y$] (qY) at (2,0) {};
  
  \draw[ex] (qX) to[bend left=28] (d);
  \draw[ex] (qX)--(d);
  \draw[ex] (qX) to[bend right=28] (d);
  
  \draw[ey] (d) to[bend left=22] (qY);
  \draw[ey] (d) to[bend right=22] (qY);
  \node[text=Xcolour,align=center,font=\scriptsize]
    at (-1,1) {$|X|=3$\\parallel copies};
  \node[text=Ycolour,align=center,font=\scriptsize]
    at (1,1) {$|Y|=2$\\parallel copies};
  \foreach \s in {-1,1}{
    \node[outside] (ll\s) at (-2.8,\s*.65) {};
    \node[outside] (rr\s) at (2.8,\s*.65) {};
    \draw[unchanged] (ll\s)--(qX) (qY)--(rr\s);
  }
\end{scope}
\begin{scope}[shift={(0,-4.6)}]
  \node[title] at (0,1.65) {$L(H)$};
  \begin{scope}[on background layer]
    \node[group,label=above:$N_X$] at (-2.6,0) {};
    \node[group,label=above:$N_Y$] at (2.6,0) {};
  \end{scope}
  \node[vx] (EX) at (-1,0) {$e_X$};
  \node[vy] (EY) at (1,0) {$e_Y$};
  \draw[inside] (EX)--node[above] {flat}(EY);
  \foreach \s in {-1,1}{
    \node[outside] (A\s) at (-2.6,\s*.3) {};
    \node[outside] (B\s) at (2.6,\s*.3) {};
    \draw[unchanged] (A\s)--(EX) (EY)--(B\s);
  }
  \draw[unchanged] (A-1)--(A1) (B-1)--(B1);
\end{scope}
\begin{scope}[shift={(7.8,-4.6)}]
  \node[title] at (0,1.65) {$G\cong L(H')$};
  \begin{scope}[on background layer]
    \node[group,label=above:$N_X$] at (-2.6,0) {};
    \node[group,label=above:$N_Y$] at (2.6,0) {};
    \fill[Xcolour!7] (-1.35,0) ellipse (.67cm and .95cm);
    \fill[Ycolour!9] (1,0) ellipse (.45cm and .8cm);
  \end{scope}
  \node[text=Xcolour] at (-1.35,1.1) {$X$};
  \node[text=Ycolour] at (1,1.1) {$Y$};
  \node[vx] (x1) at (-1.65,.55) {$x_1$};
  \node[vx] (x2) at (-1.05,0) {$x_2$};
  \node[vx] (x3) at (-1.65,-.55) {$x_3$};
  \node[vy] (y1) at (1,.4) {$y_1$};
  \node[vy] (y2) at (1,-.4) {$y_2$};
  
  \draw[ex] (x1)--(x2)--(x3)--(x1);
  \draw[ey] (y1)--(y2);
  
  \foreach \i in {1,2,3}
    \foreach \j in {1,2}
      \draw[inside] (x\i)--(y\j);
  
  \foreach \s in {-1,1}{
    \node[outside] (AA\s) at (-2.6,\s*.3) {};
    \node[outside] (BB\s) at (2.6,\s*.3) {};
    \foreach \i in {1,2,3}
      \draw[unchanged] (AA\s)--(x\i);
    \foreach \j in {1,2}
      \draw[unchanged] (BB\s)--(y\j);
  }
  \draw[unchanged] (AA-1)--(AA1) (BB-1)--(BB1);
  \node[font=\scriptsize] at (0,-1.35)
    {The five corresponding edges meet at $c$ in $H'$.};
\end{scope}
\draw[map] (3.25,0)
  --node[above,align=center,font=\scriptsize]
    {replace by\\parallel copies}(4.55,0);
\draw[map] (3.25,-4.6)
  --node[above,align=center,font=\scriptsize]
    {complete\\augmentation}(4.55,-4.6);
\draw[map] (0,-1.35)
  --node[right,font=\scriptsize]{line graph}(0,-2.7);
\draw[map] (7.8,-1.35)
  --node[right,font=\scriptsize]{line graph}(7.8,-2.7);
\node[align=center] at (3.9,-6.6) {
  For each $z\in V(H')$, the $d_{H'}(z)$ incident edges
  form a clique in $L(H')$.\\[3pt]
  Hence $\Delta(H')\le\omega(L(H'))=\omega(G)$.
};
\end{tikzpicture}
\caption{Absorbing a complete augment into the multigraph (compare Figure~\ref{fig:augment}).
The $|X|$ copies of $\eX$ and the $|Y|$ copies of $\eY$ all meet at $c$, so they form the clique
$X\cup Y$ in the line graph, and they are attached at $\pX$ and $\pY$ exactly as $\eX$ and $\eY$
were, which reproduces $\NX$ and $\NY$.}
\label{fig:parallel}
\end{figure}
\begin{lemma}\label{lem:linegraph}
Suppose that every augment in the representation of $G$ is complete. Let $H'$ be obtained from
$H$ by replacing, for every augmented flat edge $\eX\eY$ with augment $(X,Y)$, the edge
$\eX=\pX c$ by $|X|$ parallel copies and the edge $\eY=c\pY$ by $|Y|$ parallel copies. Then $H'$ is
a bipartite multigraph, $G\cong L(H')$, and $\Delta(H')\le\omega(G)$.
\end{lemma}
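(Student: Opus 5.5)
The plan has three parts: show $H'$ is bipartite, build an explicit bijection $\phi\colon V(G)\to E(H')$ that preserves adjacency, and read off the degree bound from stars. Bipartiteness is immediate, since we only add edges parallel to existing edges of $H$. This changes no endpoints, so the bipartition of $H$ still works and no loops appear. For the bijection, let $\phi$ be the identity on unaugmented vertices of $\LH$. For each augment $(X,Y)$, biject $X$ onto the $|X|$ copies of $\eX$ and $Y$ onto the $|Y|$ copies of $\eY$. Different augmented flat edges are vertex-disjoint in $\LH$, hence are distinct edges of $H$, so $\phi$ is a bijection.

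Next I check adjacency by cases on where two vertices $u,w$ of $G$ live.
\begin{enumerate}[label=\textup{(\arabic*)}]
\item Both are unaugmented. Their adjacency in $G$ equals their adjacency in $\LH$, since augmentation never changes edges among untouched vertices. That in turn equals whether the edges share an endpoint in $H$, which is unchanged in $H'$.
\item Both lie in the same augment. Completeness says $X\cup Y$ is a clique. On the other side, all copies of $\eX$ and $\eY$ meet at $c$.
\item The vertex $u$ lies in the side $X$ of an augment and $w$ lies outside that augment. By Lemma~\ref{lem:anatomy}(a), $u\sim w$ exactly when $w\in\NX$.
\end{enumerate}
The key claim for case (3) is that $\NX$ consists precisely of the vertices whose $\phi$-image is an edge at $\pX$ other than the copies of $\eX$. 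Such an edge cannot meet a copy of $\eX$ at $c$, because $\deg_H c=2$ and the only edges of $H'$ at $c$ are copies of $\eX$ and $\eY$. This is exactly what the proof of Lemma~\ref{lem:anatomy} established. There, $\NX$ is formed from the vertices of $A_X$ (the other edges of $H$ at $\pX$), with each augmented vertex of $A_X$ replaced by its whole side. After the multiplication, that side consists precisely of the parallel copies of the corresponding edge at $\pX$. The same holds symmetrically for $Y$ and $\pY$, and case (3) also covers $w$ lying in a side of another augment. The combined condition ``$w$ is an edge at $\pX$'' is exactly the condition that $\phi(u)$ and $\phi(w)$ share an endpoint, because $\phi(u)$ has endpoints $\pX$ and $c$.

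The main obstacle is the bookkeeping in case (3) when both vertices lie in sides of different augments. The paper's remark that adjacencies between augments are inherited (sides $X_i,X_j$ are complete when $x_i\sim x_j$) is the key ingredient. I will handle it by induction on the number of augments, absorbing one complete augment at a time using the commutativity from Definition~\ref{def:augment}. At each step the current graph is the line graph of the current multigraph, and the flat edge being augmented is still $\pX c,\,c\pY$ with $\deg c=2$. Its augmentation by a clique $X\cup Y$ is then literally the line graph of the multigraph with those two edges multiplied.

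Finally, for the degree bound, the edges of $H'$ at any vertex $v$ form a star. Under $G\cong L(H')$ this star is a clique of $G$ of size $\deg_{H'}v$. Hence $\Delta(H')\le\omega(G)$.
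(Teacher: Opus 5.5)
Your proposal is correct and follows the paper's proof. It uses the same bijection $\varphi$, sending each side of an augment onto the parallel copies of its edge, and the same star argument for $\Delta(H')\le\omega(G)$. The paper only asserts that $\varphi$ preserves adjacency and non-adjacency; you actually carry out that check, either via the description of $\NX$ extracted from the proof of Lemma~\ref{lem:anatomy} or by absorbing one complete augment at a time. Either one suffices, so the induction is a redundant alternative rather than a different route.
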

\begin{proof}
Distinct augmented flat edges are vertex-disjoint in $\LH$, so they involve distinct edges of
$H$ and the replacements do not interfere. $H'$ has the same vertex set and bipartition as $H$,
so it is bipartite. Define $\varphi\colon V(G)\to E(H')$ by sending every vertex of $G$ that is an
unaugmented edge of $H$ to itself, the $|X|$ vertices of each side $X$ bijectively to the copies
of $\eX$, and the $|Y|$ vertices of each side $Y$ bijectively to the copies of $\eY$. It is not difficult to verify that this is a
bijection, and  that it preserves adjacency and non-adjacency. That is, $\varphi$ defines an
isomorphism $G\to L(H')$.
Finally, the edges of $H'$ at any vertex form a clique of $L(H')\cong G$, so $\Delta(H')\le\omega(G)$.
\end{proof}
\begin{proof}[Proof of Theorem~\textup{\ref{thm:k}}]
Let $G$, $L$ and $k$ be as fixed before. By Proposition~\ref{prop:noincomplete}
every augment of $G$ is complete, so by Lemma~\ref{lem:linegraph} $G\cong L(H')$ for a bipartite
multigraph $H'$ with $\Delta(H')\le\omega(G)\le k$. 
Galvin's theorem now gives an $L$-colouring of $G$. This contradicts the choice of $L$. Hence no counterexample exists, and
every elementary graph with clique number at most $k$ is $k$-choosable proving the claim and  Theorem~\ref{thm:main}.
\end{proof}


\begin{thebibliography}{99}
\setlength{\itemsep}{2pt}
\bibitem{CD19}
M.~Chudnovsky and C.~Dibek, \emph{Strongly perfect claw-free graphs---{A} short proof}, Journal of Graph Theory \textbf{97} (2021), no.~3, 359--381.

\bibitem{CS88}
V.~Chv\'atal and N.~Sbihi,
\emph{Recognizing claw-free perfect graphs},
J.~Combin.\ Theory Ser.~B 44 (1988), 154--176.

\bibitem{EOOS02}
T.~Enomoto, K.~Ohba, K.~Ota and J.~Sakamoto,
\emph{Choice number of some complete multi-partite graphs},
Discrete Math.\ 244 (2002), 55--66

\bibitem{ERT80}
P.~Erd\H{o}s, A.~L. Rubin and H.~Taylor,
\emph{Choosability in graphs},
Congr.\ Numer.\ 26 (1980), 125--157.

\bibitem{Gal95}
F.~Galvin,
\emph{The list chromatic index of a bipartite multigraph},
J.~Combin.\ Theory Ser.~B 63 (1995), 153--158,
\href{https://doi.org/10.1006/jctb.1995.1011}{doi:10.1006/jctb.1995.1011}.

\bibitem{GM97}
S.~Gravier and F.~Maffray,
\emph{The choice number of 3-colorable elementary graphs},
Discrete Math.\ 165/166 (1997), 353--358.

\bibitem{GM98}
S.~Gravier and F.~Maffray,
\emph{Graphs whose choice number is equal to their chromatic number},
J.~Graph Theory 27 (1998), 87--97.


\bibitem{GMP2016}
S.~Gravier, F.~Maffray and L.~Pastor
\emph{On the choosability of claw-free perfect graphs},
Graphs and Combinatorics(2016), 2393--2413

\bibitem{Hall35}
P.~Hall,
\emph{On representatives of subsets},
J.~London Math.\ Soc.\ 10 (1935), 26--30.

\bibitem{Kie00}
H.~A.~Kierstead,
\emph{On the choosability of complete multipartite graphs with part size three},
Discrete Math.\ 211 (2000), 255--259.

\bibitem{Kon31}
D.~K\H{o}nig,
\emph{Gr\'afok \'es m\'atrixok},
Mat.\ Fiz.\ Lapok 38 (1931), 116--119.

\bibitem{MR99}
F.~Maffray and B.~Reed,
\emph{A description of claw-free perfect graphs},
J.~Combin.\ Theory Ser.~B 75 (1999), 134--156,
\href{https://doi.org/10.1006/jctb.1998.1872}{doi:10.1006/jctb.1998.1872}.

\bibitem{NRW15}
J.~A.~Noel, B.~A.~Reed and H.~Wu,
\emph{A proof of a conjecture of Ohba},
J.~Graph Theory 79 (2015), no.~2, 86--102,
\href{https://doi.org/10.1002/jgt.21819}{doi:10.1002/jgt.21819}.

\bibitem{NWWZ15}
J.~A.~Noel, D.~B.~West, H.~Wu and X.~Zhu,
\emph{Beyond Ohba's conjecture: a bound on the choice number of $k$-chromatic graphs with $n$ vertices},
European J.\ Combin.\ 43 (2015), 295--305,
\href{https://doi.org/10.1016/j.ejc.2014.08.032}{doi:10.1016/j.ejc.2014.08.032}.

\bibitem{Ohb02}
K.~Ohba,
\emph{On chromatic-choosable graphs},
J.~Graph Theory 40 (2002), 130--135.
\bibitem{NSN}
Nandana K. Vasudevan, K. Somasundaram, and N. Narayanan,
\emph{List-Coloring and Chromatic-Choosability -- A Dynamic Survey},
arXiv:2606.31702 [math.CO], 2026, \url{[https://arxiv.org/abs/2606.31702](https://arxiv.org/abs/2606.31702)}.

\bibitem{Viz76}
V.~G. Vizing,
\emph{Coloring the vertices of a graph in prescribed colors} (Russian),
Diskret.\ Analiz 29 (1976), 3--10.

\end{thebibliography}
\end{document}